\theoremstyle{plain}
\newtheorem*{MT}{Main Theorem}
\newtheorem{theorem}{Theorem}[section]
\newtheorem{lemma}[theorem]{Lemma}
\theoremstyle{remark}
\newtheorem{remark}{Remark}[section]
\newtheorem{example}{Example}[section]
\theoremstyle{definition}
\newtheorem{definition}{Definition}[section]
\newcommand{\linspan}{\mathop{\rm span}\nolimits}
\newcommand{\diver}{\mathop{\rm div}\nolimits}
\newcommand{\curl}{\mathop{\rm curl}\nolimits}
\newcommand{\sign}{\mathop{\rm sign}\nolimits}
\newcommand{\rest}{\left.\kern-2\nulldelimiterspace\right|_}
\newcommand{\norm}[2]{|#1|_{#2}}
\newcommand{\fractx}[2]{{\textstyle\frac{#1}{#2}}}
\newcommand{\p}{\partial}
\newcommand{\e}{\varepsilon}
\newcommand{\ed}{\mathrm d}
\newcommand{\N}{{\mathbb N}}
\newcommand{\R}{{\mathbb R}}
\newcommand{\D}{{\mathrm D}}
\newcommand{\nnn}{\mathbf n}
\newcommand{\BB}{{\mathcal B}}
\newcommand{\CC}{{\mathcal C}}
\newcommand{\EE}{{\mathcal E}}
\newcommand{\FF}{{\mathcal F}}
\newcommand{\GG}{{\mathcal G}}
\newcommand{\KK}{{\mathcal K}}
\newcommand{\LL}{{\mathcal L}}
\newcommand{\RR}{{\mathcal R}}
\newcommand{\sS}{{\mathcal S}}
\newcommand{\YY}{{\mathcal Y}}
\newcommand{\ZZ}{{\mathcal Z}}
\newcommand*{\Bigcdot}{\raisebox{-.25ex}{\scalebox{1.25}{$\cdot$}}}
\begin{document}

\title{Approximate controllability for Navier--Stokes equations in~$\mathrm{3D}$ rectangles under Lions boundary conditions}
\author{Duy Phan}
\address{Duy Phan\newline \indent
Mathematics Laboratory, Tampere University of Technology,\newline\indent
P.O. Box 553, 33101 Tampere, Finland.\newline \indent
e-mail: {\small\tt duy.phan-duc@tut.fi}}
\author{S\'ergio S. Rodrigues}
\address{S\'ergio S. Rodrigues\newline \indent
Johann Radon Institute for Computational and Applied Mathematics, \"OAW,\newline\indent
Altenbergerstra{\normalfont\ss}e 69, 4040 Linz, Austria.\quad Tel. +43 732 2468 5241,\newline \indent
e-mail: {\small\tt sergio.rodrigues@ricam.oeaw.ac.at}}

\maketitle
\begin{abstract}
The $\mathrm{3D}$ Navier--Stokes system, under Lions boundary conditions, is proven to be approximately controllable provided a suitable saturating set does exist.
An explicit saturating set for $\mathrm{3D}$ rectangles is given.

\smallskip
\noindent {MSC2010:} 93B05, 35Q30, 93C20.

\smallskip
\noindent {Keywords:} Navier--Stokes equations; approximate controllability; saturating set.


\smallskip

\end{abstract}

\maketitle

\section{Introduction}
We consider the incompressible $\mathrm{3D}$ Navier--Stokes system in~$(0,\,T)\times\Omega$, under Lions boundary conditions,%
\begin{subequations}\label{sys-u-flat}
 \begin{align}
 \p_t u+\langle u\cdot\nabla\rangle u-\nu\Delta u+\nabla p+h&=\eta, &\diver u &=0,\label{sys-u-flat-eq}\\
 \left.\begin{pmatrix} u\cdot\nnn\\ \curl u-((\curl u)\cdot\nnn)\nnn\end{pmatrix}\right|_{\p\Omega}&=\begin{pmatrix}0\\0\end{pmatrix}, &u(0,\,x)&=u_0(x),
\end{align}
\end{subequations}
where~$\Omega\subset\R^3$ is a rectangle~$\Omega=(0,L_1)\times(0,L_2)\times(0,L_3)$, whose boundary is denoted by~$\p\Omega$. 
As usual $u=(u_1,u_2,u_3)$ and~$p$,
defined for $( t,x_1,x_2,x_3)\in I\times\Omega$, are respectively the unknown velocity field and
pressure of the fluid, $\nu>0$ is the viscosity, the operators~$\nabla$ and~$\Delta$ are respectively the
well known gradient and Laplacian in the space variables $(x_1,x_2,x_3)\in\Omega$,
$\langle u\cdot\nabla\rangle v$ stands for $(u\cdot\nabla v_1,u\cdot\nabla v_2,u\cdot\nabla v_3)$,
$\diver u\coloneqq \sum_{i=1}^3 \p_{x_i}u_i$, the vector~$\nnn$ stands for the outward unit normal vector to~$\p\Omega$, and $h$ is a fixed function. Finally, $\eta$ is a
control at our disposal.

Lions boundary conditions~(cf.~\cite[Section~6.9]{Lions69}) are a particular case of Navier boundary conditions.
For works and motivations concerning Lions and Navier boundary conditions (in both $\mathrm{2D}$ and $\mathrm{3D}$ cases) we
refer to~\cite{XiaoXin07,XiaoXin13,PhanRod17,ChemetovCiprianoGavrilyuk10,Kelliher06,IlyinTiti06} and references therein.

\subsection{The evolutionary system}\label{sS:evol}
We can rewrite system~\eqref{sys-u-flat} as an evolutionary system
\begin{equation}\label{sys_u}
 \dot u+A u+B(u,u)+h=\eta,\quad u(0)=u_0,
\end{equation}
in the subspace~$H\coloneqq\{u\in L^2(\Omega,\,\R^3)\mid\diver u=0\mbox{ and }(u\cdot\nnn)\rest{\p\Omega}=0\}$ of divergence free vector fields
which are tangent to the boundary. We may suppose that~$h$ and~$\eta$ take their values in~$H$ (otherwise we just take their orthogonal projections onto~$H$). We consider $H$, endowed with the norm inherited from $L^2(\Omega,\,\R^3)$, as a pivot space,
that is, $H=H'$. Further we set the spaces
\begin{align*}
 V&\coloneqq\{u\in H^1(\Omega,\,\R^3\mid u\in H\},\\
 \D(A)&\coloneqq\{u\in H^2(\Omega,\,\R^3)\mid u\in H, \quad\curl u-((\curl u)\cdot\nnn)\nnn\rest{\p\Omega}=0\}
\end{align*}

Above, for $u,v,w\in V$,
\begin{align}
 A&\colon V\to V',&\langle A u,v\rangle_{V',V}&\coloneqq \nu(\curl u,\curl v)_{L^2(\Omega,\,\R^3)},\label{LStokes}\\
 B&\colon V\times V\to V',&\langle B(u,v),w\rangle_{V',V}&\coloneqq -\int_\Omega(\langle u\cdot\nabla\rangle w)\cdot v\,\ed\Omega.\label{Bop}
\end{align}

It turns out that~$\D(A)=\{u\in H\mid Au\in H\}$ is the domain of~$A$. We will refer to~$A$ as the Stokes operator, under Lions boundary conditions. Further,
we have the continuous, dense, and compact inclusions~$\D(A)\xhookrightarrow{\rm d,c}V\xhookrightarrow{\rm d,c}H$. 

Denoting by~$\Pi$ the orthogonal projection in $L^2(\Omega,\,\R^3)$ onto $H$, for~$u,v\in\D(A)$ 
we may write~$Au\coloneqq\Pi(\nu\Delta u)$,
and $B(u,\,v)\coloneqq\Pi(\langle u\cdot\nabla\rangle v)$.

Further~$A$ maps~$V$ onto~$V'$, and the operator $A^{-1}\in\LL(H)$ is compact. The eigenvalues of $A$,
repeated accordingly with their multiplicity, form an increasing sequence $(\underline\lambda_k)_{k\in\N_0}$,
\[
 0<\underline\lambda_1\le\underline\lambda_2\le\underline\lambda_3\le\underline\lambda_4\le\dots,
\]
with $\underline\lambda_k$ going to $+\infty$ with $k$. 

\begin{remark}
 It is clear that the Stokes operator~\eqref{LStokes} is well defined, mapping~$V$ into~$V'$.
 We also see that the bilinear operator~\eqref{Bop} maps~$V\times V$ into~$V'$, due to the estimate

 \begin{align*}
  \langle B(u,v),w\rangle_{V',V} &\le {\rm C}_1\norm{u}{L^6(\Omega,\R^3)}\norm{\nabla w}{L^2(\Omega,\R^9)}\norm{v}{L^3(\Omega,\R^3)} \\
  &\le {\rm C}_2\norm{u}{H^1(\Omega,\R^3)}
 \norm{w}{H^1(\Omega,\R^3)}\norm{v}{H^1(\Omega,\R^3)}
 \end{align*}
 For further estimations on the bilinear operator we refer to~\cite[Section~2.3]{Temam95}.
\end{remark}

\subsection{Saturating sets and approximate controllability}\label{sS:saturH}

In the pioneering work~\cite{AgraSary05} the authors introduced a method which led to the controllability of finite-dimensional Galerkin approximations of
the~$\mathrm{2D}$ and~$\mathrm{3D}$ Navier--Stokes system, and to the approximate controllability of the~$\mathrm{2D}$ Navier--Stokes system, by means of
low modes/degenerate forcing.

Hereafter~$U\subseteq H$ will stand for a linear subspace of~$H$, and we denote
\[
 \BB(a,b)\coloneqq B(a,b)+B(b,a).
\]
\begin{definition}\label{D:FF-l}
Let~$\CC=\{W_k\mid k\in\{1,\,2,\,\dots,\,M\}\}$ and let~$E$ be a finite-dimensional space so that~$\CC\subset E\subset U$. The finite-dimensional subspace $\FF_{\tt L}(E)\subset U$ is given by
\[
\FF_{\tt L}(E)\coloneqq E+\linspan\{\BB(a,b)\mid a\in\CC,\,b\in E,\,\mbox{ and } (B(a,a),B(b,b))\in H\times H\}\textstyle\bigcap  U,
\]
\end{definition}

\begin{definition}\label{D:satur-l}
A given finite subset~$\CC=\{W_k\mid k\in\{1,\,2,\,\dots,\,M\}\}
 \subset U$
 is said $({\tt L},U)$-saturating if for the following sequence of subspaces
 ~$\GG^j\subset U$, defined recursively by
 \begin{align*}
  \GG^0&\coloneqq\linspan\CC,\qquad \GG^{j+1}\coloneqq\FF_{\tt L}(\GG^{j}),
\end{align*}
we have that the union~$\mathop{\bigcup}\limits_{j\in\N}\GG^j$ is dense in~$H$.
\end{definition}

In~\cite[Section~4]{AgraSary06} an explicit saturating set with~$4$ elements
is presented for the~$\mathrm{2D}$ Navier--Stokes system under periodic boundary conditions.

\begin{remark}
In order to deal with different types of boundary conditions and domains the definitions of saturating set has been slightly changed/relaxed in several works.
The definition of saturating set in~\cite[Section~4]{AgraSary06} is slightly different from Definition~\ref{D:satur-l}.
But, we can prove (cf.~\cite[Section~6.1]{Rod-Thesis08})
that the saturating set presented 
in~\cite{AgraSary06} is also $({\tt L},\D(A))$-saturating (cf.~\cite[Definition~2.2.1]{Rod-Thesis08}). Actually, in~\cite{AgraSary06} saturating sets are defined through the
frequencies (say, indexes) of eigenfunctions of the Stokes operator, but we can rewrite the definition in terms of the eigenfunctions themselves.
\end{remark}

We would like to refer also to
the works~\cite{Romito04,HairerMatt06,EMat01}, where the notion of saturating set was used to
derive ergodicity for the Navier--Stokes system under degenerate stochastic forcing (compare the sequence of subsets $\ZZ_n$ in~\cite[Section~4]{HairerMatt06} with the
sequence of subsets $\KK^n$ in~\cite[Section~8]{AgraSary05}).

In the pioneering work~\cite{AgraSary05} the set~$U$ in~\eqref{D:satur-l} is taken to be~$\D(A)$, the same is done
in~\cite{AgraSary06,Rod-Sev05,Rod06,Shirikyan06}. Later, in~\cite{Rod-Thesis08,Rod-wmctf07,PhanRod-ecc15}, $U$ is taken as~$V$ in order to delal either with Navier-type boundary
conditions or with internal controls supported in a small subset.

Often, for~$\mathrm{2D}$ Navier--Stokes equations and~$\mathrm{1D}$ Burgers equations, we have estimates for the bilinear term~$B(\Bigcdot,\Bigcdot)$ which allow us to
derive the well-posedness of the Cauchy problem, or that we can use to derive the controllability results. For example, the estimate 
\begin{align*}
\norm{\langle B(z+y,\,y),\,z\rangle_{V',\,V}}{\R}\le {\rm C}_1\norm{z}{H}\norm{z}{V}\norm{y}{V}+{\rm C}_1\norm{y}{H}\norm{y}{V}\norm{z}{V}
\end{align*}
is used in~\cite{PhanRod-ecc15} in the $\mathrm{1D}$ and~$\mathrm{2D}$ settings, to derive approximate controllability results for the Navier--Stokes and Burgers equations.
The same estimate~$$\norm{\langle B(z,\,y),\,z\rangle_{V',\,V}}{\R} \le {\rm C}_1\norm{z}{H}\norm{z}{V}\norm{y}{V}$$ above can be used to prove the
uniqueness of weak solutions for the corresponding systems. The estimate does not hold in the $\mathrm{3D}$ case.

In~\cite{Shirikyan06}, the method introduced in~\cite{AgraSary05} is developed so the case where the
well-posedness of the Cauchy problem is not known.

\begin{definition}\label{D:FF}
Given a finite dimensional space $E\subset U$. The finite-dimensional \\$\FF_{\tt B}(E)$ is the largest linear subspace~$F\subset U$ so that any~$\eta_1\in F$ can be written as
\[\eta_1=\eta-\sum_{j=1}^k\alpha_j B(\zeta^j),\]
with $k\in\N_0$, $(\eta,\zeta^1,\dots,\zeta^k)\in E^{1+k}$, and $(\alpha_1,\dots,\alpha_k)\in [0,+\infty)^k$.
\end{definition}

\begin{definition}\label{D:satur-b}
 A given finite subset~$\CC=\{W_k\mid k\in\{1,\,2,\,\dots,\,M\}\}
 \subset U$
 is said $({\tt B},U)$-saturating if for the following sequence of subspaces
 of~$\EE^{j}\subset U$, defined recursively by
 \begin{align*}
  \EE^0&\coloneqq\linspan\CC,\qquad \EE^{j+1}\coloneqq\FF_{\tt B}(\EE^{j}),
 \end{align*}
we have that the union~$\mathop{\bigcup}\limits_{j\in\N}\EE^j$ is dense in~$H$.
\end{definition}

Though, in~\cite{Shirikyan06} the author focuses on no-slip boundary conditions, $u\rest{\p\Omega}=0$, the results also hold
for other boundary conditions. This is also mentioned in~\cite[Section 2.3. Remark~2.7]{Shirikyan06} where
the author considers the case of periodic boundary conditions, and presents an
explicit~$({\tt B},\D(A))$-saturating set~$\CC$ (for the case of~$(1,1,1)$-periodic vectors) whose~$64$ elements are
eigenfunctions of the Stokes operator (i.e., the Laplacian).
For a general period~$q=(q_1,q_2,q_3)\in(\R_0)^3$ the existence of a saturating set is also proven~\cite[Section 2.3, Theorem~2.5]{Shirikyan06},
though the form of the saturating set is less explicit. 

Following the proof of the main Theorem~2.2 in~\cite{Shirikyan06} we can see that the result holds for a generic setting where we have the subspaces
\begin{gather*}
 \D(A)\xhookrightarrow{\rm d,c} V=\D(A^\frac{1}{2})\xhookrightarrow{\rm d,c} H=H',\quad V\subset H\cap H^1(\Omega,\R^3),\quad \D(A)\subset H\cap H^2(\Omega,\R^3),
\end{gather*}
with~$\D(A)=\{u\in H\mid Au\in H\}$ being the domain Stokes operator~$A$ (which depends on the boundary conditions), and where
the scalar products
\[
 \langle Au,v\rangle_{V',V}\quad\mbox{and}\quad (Au,Av)_H
\]
induce norms in~$V$ and~$\D(A)$, respectively, which are equivalent to the those inherited from~$H^1(\Omega,\R^3)$ and~$H^2(\Omega,\R^3)$, respectively.
\begin{remark}
The notation~$S\xhookrightarrow{}R$ above means that the inclusion~$S\subseteq R$ is continuous. The letter ``${\rm d}$'' (resp.~``${\rm c}$'') means that, in addition, the inclusion
is also dense (resp.~compact).
\end{remark}
\begin{remark}
 In the periodic case mentioned above, usually we take a smaller subspace~$H_{\rm per}\subset H$ in order to factor out
 the kernel of~$A$ (as an operator in~$H$), and guarantee
 that $(u,v)\mapsto\langle Au,v\rangle_{V_{\rm per}',V_{\rm per}}$ defines a scalar product in~$V_{\rm per}\coloneqq V\bigcap H_{\rm per}$.
 Notice that, for a nonzero constant vector field~$u$, and under periodic boundary conditions, we will have
 ~$Au=-\nu\Delta u=0$ and thus~$\langle Au,u\rangle_{V',V}=0$. Hence, $\langle Au,v\rangle_{V',V}$ does not define a scalar product in~$V=H\cap H^1(\Omega,\R^3)$.
\end{remark}

In particular, the results in~\cite{Shirikyan06} hold true for Lions boundary conditions, and we can conclude that approximate controllability for~$\mathrm{3D}$ Navier--Stokes
equation follows from the existence of
a~$({\tt B},\D(A))$-saturating set.

In this paper, we prove that approximate controllability also follows from the existence of
a~$({\tt L},\D(A))$-saturating set. Namely, we will prove the following.
\begin{MT}
Let $(u_0,\hat u)\in V\times V$, ~$\varepsilon>0$, and~$T > 0$. If~$\CC$ is a $({\tt L},\D(A))$-saturating set,
then we can find a control~$\eta\in L^\infty((0,T),\GG^1)$ so that the solution of system~\eqref{sys_u} satisfies
$\norm{u(T)-\hat u}{V}<\varepsilon$.
\end{MT}

Further, for any given length triplet~$L=(L_1,L_2,L_3)$, we present an explicit $({\tt L},\D(A))$-saturating set~$\CC$ for the~$\mathrm{3D}$
rectangle~$\Omega=(0,L_1)\times(0,L_2)\times(0,L_3)$. The elements of~$\CC$ are~$81$ eigenfunctions of the Stokes operator,
under Lions boundary conditions (cf. Theorem~\ref{T:satur3Drect} hereafter). 
Though it is not our goal here to find a saturating set with the minimum number of elements
as possible, we must say that for some~$L$ (maybe, even for all~$L$)
it may exist a saturating set with less elements. In any case, we underline that the existence of
a $({\tt L},\D(A))$-saturating set~$\CC$ is independent of the viscosity coefficient~$\nu$. In particular, the
linear space~$\GG^1$, where the control~$\eta$ takes its values in, does not change with~$\nu$.

Finally, we recall that in~\cite{Rod-wmctf07,Rod06} an explicit saturating set was found for a~$\mathrm{2D}$ rectangle $\Omega=(0,L_1)\times(0,L_2)$ with~$8$ elements.
In~\cite{PhanRod-ecc15} a saturating set with~$24$ elements is presented for the~$\mathrm{2D}$ Navier--Stokes system in a Cylinder under Lions boundary conditions
i.e., in a channel with Lions boundary conditions in the bounded direction and with periodicity assumption in the unbounded direction).

\begin{remark}
The ``${\tt L}$'' subcript in Definition~\ref{D:satur-l} underlines the fact that the {\em linearization}~$\BB$ of~$B$ is used in the recursion step, while in
The ``${\tt B}$'' subcript in Definition~\ref{D:satur-b} underlines the fact that the {\em bilinear} operator~$B$ is used in the recursion step.
\end{remark}

\subsection{Motivation and further references}
An advantage for considering~$({\tt L},\D(A))$-saturating sets is that the construction of~$\FF_{\tt L}(E)$ is easier than
the construction of~$\FF_{\tt B}(E)$. This is important, when we
need to dwell with explicit computations as in the case when we look for explicit saturating sets. Often, the existence of saturating sets is proven by showing that a
given explicit set is saturating, which involve essentially explicit computations
(Theorem~2.5 in~\cite{Shirikyan06} is an exception, but the proof is still strongly based on explicit computations).

For further results concerning the controllability and approximate controllability of Navier--Stokes (and also other) systems by a control with low finite-dimensional range
(independent of the viscosity coefficient) in several domains (including the $\mathrm{2D}$ Sphere and Hemisphere) we refer the
reader to~\cite{Nersisyan10,Nersisyan11,Nersesyan10,Shirikyan08,Shirikyan_Evian07,Shirikyan07,Sarychev12,AgraSary06,AgraSary08,AgrKukSarShir07}.
We also mention Problem~VII raised by A.~Agrachev in~\cite{Agrachev13_arx} where the author inquires about the achievable controllability properties for 
controls taking values in a saturating set whose elements are localized/supported in a small subset~$\omega\subset\Omega$. The existence of such saturating sets is an open
question (except for~$\mathrm{1D}$ Burgers in ~\cite{PhanRod-ecc15}). The controllability properties implied by such saturating set is an open question. There are some negative
results, as for example in the case we consider the~$\mathrm{1D}$ Burgers
equations in~$\Omega=(0,1)$ and take controls in~$L^2(\omega,\R)$, $w\subset\Omega$, the approximate controllability fails to hold. Instead, to drive the system
from one state~$u_0=u(0)$ at time~$t=0$ to another one~$u_T=u(T)$ at time~$t=T$, we may need $T$ to be big enough. Though we do not consider localized controls here, we refer
the reader to the related results in~\cite{FerGue07,Diaz96,Shirikyan17} and references therein.

Finally we would like to mention that in previous works the existence of a saturating set implied the exact controllability of Galerkin approximations and also the
exact controllability onto finite dimensional projections, see for example~\cite{AgraSary05}. To prove these results some geometric control tools are used. We refer
also to~\cite{ChambMasoSigaBosc09} where the approximate controllability is derived from controllability of Galerkin approximations.

\bigskip
The rest of the paper is organized as follows. In Section~\ref{S:AppControl} we prove that the existence of a~$({\tt L},\D(A))$-saturating set implies the approximate controllability of the
Navier--Stokes system. In Section~\ref{S:Rectangle} we present a $({\tt L},\D(A))$-saturating set.

\section{Approximate controllability}\label{S:AppControl}
As we said above, in~\cite{Shirikyan06} it is proven that the existence of a $({\tt B},\D(A))$-saturating set implies the
approximate  controllability of the $\mathrm{3D}$~Navier--Stokes system, at time~$T>0$. Here we prove that
we can conclude the same controllability property from the existence of a $({\tt L},\D(A))$-saturating set.

We recall now some definitions from~\cite{Shirikyan06}. Hereafter $u_0\in V$, ~$h\in L^2_{\rm loc}(\R_0,H)$, and~$E\subset\D(A)$ is a finite-dimensional subspace.
Let us consider the system
\begin{equation}\label{sys_u_E}
 \dot u+A u+B(u,u)+h=\eta,\quad u(0)=u_0, 
\end{equation}
where the control~$\eta$ takes its values in~$E$.

For simplicity we will denote
\[
 I_T\coloneqq(0,T),\quad\mbox{and}\quad \overline{I_T}\coloneqq [0,T],\qquad T>0.
\]

\begin{definition}
 Let $T$ be a positive constant. System~\eqref{sys_u_E} is said to be $E$-approximately controllable in time~$T$
 if for any~$\e > 0$ and any pair~$(u_0,\hat u)\in V\times \D(A)$, there exists a control function
$\eta\in L^\infty(I_T, E)$ and a corresponding solution $u\in C(\overline{I_T},V)\bigcap L^2(I_T, \D(A))$, such that
$\norm{u(T)-\hat u}{V} <\e$.
\end{definition}

\begin{definition}
Let $T$, $R$, and~$\e$ be positive constants. System~\eqref{sys_u_E}
is said to be $(\e, R,E)$-controllable in time~$T$ if for any $(u_0,\hat u)\in V\times \D(A)$ satisfying $\norm{u_0}{V}\le R\ge\norm{\hat u}{\D(A)}$,  there exists
a control function $\eta\in L^\infty(I_T, E)$ and a corresponding solution $u\in C(\overline{I_T},V)\bigcap L^2(I_T, \D(A))$ such that~$\norm{u(T)-\hat u}{V} <\e$.
\end{definition}

Recall the sequence in Definition~\ref{D:satur-b}. In~\cite[Section~2]{Shirikyan06} we find the following results.
\begin{theorem}\label{T:AppCont-b}
If~$\CC$ is a $({\tt B},\D(A))$-saturating set,
then for any positive~$T > 0$ the system~\eqref{sys_u_E} is $\EE^0$-approximately controllable at time~$T$.
\end{theorem}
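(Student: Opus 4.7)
The strategy I would follow is the Agrachev--Sarychev convexification scheme, in the refined form developed by Shirikyan to cope with the $\mathrm{3D}$ regime where global well-posedness of the Cauchy problem is not available. The proof would go by descending induction on $j\in\N$: I would establish that system~\eqref{sys_u_E} is $\EE^j$-approximately controllable at time $T$ for every $j$, so that the case $j=0$ is precisely the claim.

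For the base case I would exploit that $\bigcup_{j}\EE^j$ is dense in $H$ to obtain $\EE^N$-approximate controllability for $N$ sufficiently large. Given $(u_0,\hat u)\in V\times\D(A)$, I would construct a smooth curve $\widetilde u$ on $\overline{I_T}$ joining $u_0$ to $\hat u$, with $\widetilde u(t)\in\D(A)$ for $t>0$ (applying a brief smoothing by the Stokes semigroup near $t=0$ if $u_0\notin\D(A)$), and form the ``synthesized'' control $\widetilde\eta\coloneqq\dot{\widetilde u}+A\widetilde u+B(\widetilde u,\widetilde u)+h\in L^\infty(I_T,H)$. Density lets me approximate $\widetilde\eta$ in $L^2(I_T,H)$ by a piecewise-constant $\EE^N$-valued control, and a local stability estimate for strong $V$-solutions along the regular reference $\widetilde u$ transfers closeness of the controls into closeness of the terminal states in $V$.

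For the induction step $\EE^{j+1}\Rightarrow\EE^j$, I would take a piecewise-constant $\EE^{j+1}$-valued control $\widetilde\eta$ provided by the inductive hypothesis and, on each subinterval $[t_k,t_{k+1}]$ of an associated partition of $\overline{I_T}$, decompose its constant value using Definition~\ref{D:FF} as
\[
\widetilde\eta_k=\eta_k-\sum_{l=1}^{m_k}\alpha_{k,l}B(\zeta^{k,l},\zeta^{k,l}),\qquad \eta_k,\zeta^{k,l}\in\EE^j,\quad\alpha_{k,l}\ge 0.
\]
I would then replace $\widetilde\eta$ by the highly oscillatory $\EE^j$-valued control
\[
\eta^\sigma(t)\coloneqq \eta_k+\sigma^{-1}\sum_{l=1}^{m_k}\phi_{k,l}(t/\sigma)\zeta^{k,l}\qquad(t\in[t_k,t_{k+1}]),
\]
where the $\phi_{k,l}$ are mean-zero periodic profiles whose primitives $\psi_{k,l}$ satisfy $\langle\psi_{k,l}\psi_{k,l'}\rangle=\alpha_{k,l}\delta_{ll'}$. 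The change of unknown $u^\sigma=v^\sigma+\sum_l\psi_{k,l}(t/\sigma)\zeta^{k,l}$ turns~\eqref{sys_u_E} into an equation for $v^\sigma$ whose right-hand side averages, in the fast-oscillation limit $\sigma\to 0^+$, exactly to $\widetilde\eta$, because $\sum_l\psi_{k,l}^2(t/\sigma)B(\zeta^{k,l},\zeta^{k,l})$ reproduces on average $\sum_l\alpha_{k,l}B(\zeta^{k,l},\zeta^{k,l})$.

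The hard part, in my view, is making this averaging limit rigorous in $\mathrm{3D}$: uniform-in-$\sigma$ energy estimates cannot be closed by standard Galerkin methods, since strong solutions are only known to exist locally. Following Shirikyan, I would work on a maximal interval of $V$-regularity for the oscillating system, exploit that the reference trajectory $\widetilde u$ is itself a strong $V$-solution on $\overline{I_T}$ (guaranteed by the inductive hypothesis together with the regularity of the construction), and bound the difference $v^\sigma-\widetilde u$ by a perturbation/Gronwall argument; the linear oscillating terms $\psi_{k,l}(t/\sigma)A\zeta^{k,l}$ and the cross terms $\psi_{k,l}(t/\sigma)\BB(v^\sigma,\zeta^{k,l})$ vanish in a weak/distributional sense as $\sigma\to 0^+$ but must be dominated in a norm strong enough to maintain the $V$-bound on $v^\sigma$ throughout $\overline{I_T}$ and to ensure $u^\sigma(T)\to \widetilde u(T)$ in $V$. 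Concatenating the base case with this inductive descent then yields $\EE^0$-approximate controllability at the prescribed time $T$.
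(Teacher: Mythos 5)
The paper offers no proof of Theorem~\ref{T:AppCont-b}: the statement is imported verbatim from \cite[Section~2]{Shirikyan06}, so the only meaningful comparison is with Shirikyan's argument, which your outline reconstructs faithfully --- density of $\bigcup_j\EE^j$ gives $\EE^N$-approximate controllability for $N$ large via a synthesized control along a regular reference trajectory, and the descent $\EE^{j+1}\Rightarrow\EE^j$ is exactly the content of Theorem~\ref{T:IndE-E1-b}, proved by convexification and fast oscillation, with the $\mathrm{3D}$ difficulty handled by a perturbation estimate on a maximal interval of $V$-regularity (this is \cite[Remark~1.9]{Shirikyan06}, which the present paper also invokes in the proof of Theorem~\ref{T:IndE-E1-l}). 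Two presentational points on your induction step. First, Shirikyan splits it into two lemmas: one replaces the control $\eta_k-\sum_l\alpha_{k,l}B(\zeta^{k,l},\zeta^{k,l})$ by a pair $(\tilde\eta,\tilde\zeta)$ acting on the shifted system $\dot y+A(y+\tilde\zeta)+B(y+\tilde\zeta,y+\tilde\zeta)+h=\tilde\eta$, and the other removes the shift $\tilde\zeta$; your single substitution $u^\sigma=v^\sigma+\sum_l\psi_{k,l}(t/\sigma)\zeta^{k,l}$ merges the two, which is fine provided you also arrange $\psi_{k,l}$ to vanish at the endpoints of each subinterval so that $u^\sigma$ and $v^\sigma$ agree at the concatenation times and at $t=T$. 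Second, for the linear terms $\psi_{k,l}(t/\sigma)A\zeta^{k,l}$, the cross terms $\psi_{k,l}(t/\sigma)\BB(v^\sigma,\zeta^{k,l})$, and the off-diagonal products $\psi_{k,l}\psi_{k,l'}B(\zeta^{k,l},\zeta^{k,l'})$ to average out, you need both $\langle\psi_{k,l}\rangle=0$ and $\langle\psi_{k,l}\psi_{k,l'}\rangle=\alpha_{k,l}\delta_{ll'}$; you state only the latter. Neither point is a substantive gap.
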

\begin{theorem}\label{T:IndE-E1-b}
Let $T$, $R$, and~$\e$ be positive constants. Then system~\eqref{sys_u_E} is
$(\e, R, E)$-controllable in time~$T$ if it is $(\e, R, \FF_{\tt B}(E))$-controllable at time~$T$.
\end{theorem}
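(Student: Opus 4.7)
The plan is to reproduce, in this setting, the Agrachev--Sarychev convexification argument for the~$\mathrm{3D}$ Navier--Stokes system as worked out in~\cite{Shirikyan06}: every ``virtual'' control~$\eta_1\in\FF_{\tt B}(E)$ can be realised, in an averaged sense, by a genuine control taking values in the smaller space~$E$ but oscillating very fast in time, the point being that the quadratic interaction of two fast oscillating modes produces a slow deterministic term of the form~$\sum_j\alpha_j B(\zeta^j,\zeta^j)$.

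First, I would fix $(u_0,\hat u)\in V\times\D(A)$ with $\norm{u_0}{V}\le R\ge\norm{\hat u}{\D(A)}$ and use the $(\e,R,\FF_{\tt B}(E))$-controllability hypothesis to obtain a control $\eta_1\in L^\infty(I_T,\FF_{\tt B}(E))$ and a corresponding solution $\tilde u\in C(\overline{I_T},V)\cap L^2(I_T,\D(A))$ of~\eqref{sys_u_E} satisfying $\norm{\tilde u(T)-\hat u}{V}<\e$; the strict inequality permits me to fix $\e_0>0$ with $\norm{\tilde u(T)-\hat u}{V}<\e-\e_0$. A density and continuity argument, relying on the fact that the flow of~\eqref{sys_u_E} depends continuously on the control, then reduces the problem to the case where $\eta_1$ is piecewise constant in~$t$, so that on each subinterval Definition~\ref{D:FF} furnishes a decomposition
\[
\eta_1(t)=\zeta(t)-\sum_{j=1}^{k}\alpha_j\,B(\zeta^j,\zeta^j),\qquad \zeta(t),\zeta^j\in E,\quad \alpha_j\ge 0.
\]

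The core construction is, for small $\delta>0$, the ansatz $u^\delta=z^\delta+y^\delta$ with a fast oscillating corrector
\[
y^\delta(t)\coloneqq\sum_{j=1}^{k}\sqrt{\alpha_j}\,\zeta^j\,\phi_j(t/\delta),
\]
where the scalar smooth functions $\phi_j$ are of zero mean and satisfy $\overline{\phi_j^2}=1$, $\overline{\phi_j\phi_l}=0$ for $j\ne l$, truncated in thin boundary layers near~$t=T$ and near the partition points so that $y^\delta(T)=0$. The genuine control $\eta^\delta\coloneqq\zeta(t)+\dot y^\delta\in L^\infty(I_T,E)$ lies in~$E$ because $\zeta^j\in E$. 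Substituting into~\eqref{sys_u_E} yields for the slow part~$z^\delta$ an equation of the form
\[
\dot z^\delta+A z^\delta+B(z^\delta,z^\delta)+A y^\delta+\BB(z^\delta,y^\delta)+B(y^\delta,y^\delta)+h=\zeta(t),
\]
where $Ay^\delta$ and $\BB(z^\delta,y^\delta)$ are purely oscillatory and tend weakly to zero, and by the orthogonality of the $\phi_j$ the term $B(y^\delta,y^\delta)$ converges weakly to $\sum_j\alpha_j B(\zeta^j,\zeta^j)$; the formal $\delta\to 0$ limit for $z^\delta$ is thus~\eqref{sys_u_E} with control $\eta_1$, whose solution is~$\tilde u$.

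The main obstacle is to upgrade this formal averaging to strong convergence $u^\delta(T)\to\tilde u(T)$ in~$V$. This requires $\delta$-uniform estimates for~$z^\delta$ in $L^\infty(I_T,V)\cap L^2(I_T,\D(A))$ together with compactness in time, a task made delicate in the $\mathrm{3D}$ setting by the sub-optimal estimate on the trilinear form $\langle B(u,v),w\rangle_{V',V}$; however, $y^\delta$ takes values in a fixed finite-dimensional subspace of $E\subset\D(A)$, so $Ay^\delta$ and $\BB(z^\delta,y^\delta)$ may be treated perturbatively, and a Gr\"onwall estimate for the difference $z^\delta-\tilde u$, exploiting the regularity of the reference trajectory $\tilde u$, yields the desired convergence. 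Once $u^\delta(T)\to\tilde u(T)$ in~$V$ is established, picking $\delta$ small enough that $\norm{u^\delta(T)-\tilde u(T)}{V}<\e_0$ delivers $\norm{u^\delta(T)-\hat u}{V}<\e$ and thus the claimed $(\e,R,E)$-controllability.
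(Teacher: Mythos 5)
The paper does not actually prove Theorem~\ref{T:IndE-E1-b}: it is quoted from~\cite[Section~2]{Shirikyan06} (``In~\cite[Section~2]{Shirikyan06} we find the following results''), so there is no internal proof to compare against. Your sketch is a faithful reconstruction of the argument behind that citation, namely the Agrachev--Sarychev convexification: reduce to piecewise constant virtual controls, decompose $\eta_1=\zeta-\sum_{j}\alpha_j B(\zeta^j,\zeta^j)$ as in Definition~\ref{D:FF}, and realise the quadratic terms by a fast oscillating corrector $y^\delta$ with values in the fixed finite-dimensional space $E\subset\D(A)$, whose self-interaction $B(y^\delta,y^\delta)$ averages to $\sum_j\alpha_jB(\zeta^j,\zeta^j)$. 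It is also structurally the same route the authors follow where they do write out a proof, namely for the analogous $\FF_{\tt L}$ statement (Theorem~\ref{T:IndE-E1-l}), except that there Shirikyan's Propositions~3.1 and~3.2 are kept as black boxes rather than the oscillation analysis being redone. One caveat about the final step of your sketch: the perturbations $Ay^\delta$, $\BB(z^\delta,y^\delta)$ and $B(y^\delta,y^\delta)-\sum_j\alpha_jB(\zeta^j,\zeta^j)$ are only $O(1)$ in $L^2(I_T,H)$ --- it is their time primitives that are $O(\delta)$ --- so a direct Gr\"onwall inequality on $z^\delta-\tilde u$ does not close; one must first integrate the oscillatory forcing by parts in time, or equivalently pass through the auxiliary system $\dot y+A(y+\tilde\zeta)+B(y+\tilde\zeta,y+\tilde\zeta)+h=\tilde\eta$ and the local stability estimate of~\cite[Remark~1.9]{Shirikyan06}, which is what makes the argument work in the $\mathrm{3D}$ setting where global well-posedness is unavailable. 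With that standard adjustment your outline matches the cited proof.
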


Recall the sequence in Definition~\ref{D:satur-l}. Here we prove the following.
\begin{theorem}\label{T:AppCont-l}
If~$\CC$ is a $({\tt L},\D(A))$-saturating set,
then for any positive~$T > 0$ the system~\eqref{sys_u_E} is $\GG^1$-approximately controllable at time~$T$.
\end{theorem}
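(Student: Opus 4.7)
The plan is to mirror the architecture of Shirikyan's proof of Theorem~\ref{T:AppCont-b} (which yields $\EE^{0}$--approximate controllability from a $({\tt B},\D(A))$--saturating set), substituting the ``${\tt B}$--extension'' of Theorem~\ref{T:IndE-E1-b} by an analogous ``${\tt L}$--extension'' principle. Concretely I would proceed in three steps: (i)~establish an ${\tt L}$--analog of Theorem~\ref{T:IndE-E1-b} of the form \emph{``if system~\eqref{sys_u_E} is $(\varepsilon, R, \FF_{\tt L}(E))$--controllable at time~$T$ and $\linspan\CC \subseteq E$, then it is $(\varepsilon, R, E)$--controllable at time~$T$''}; (ii)~iterate this step downward from a large index $N$ to $j = 1$, reducing matters to proving $(\varepsilon, R, \GG^{N})$--controllability; (iii)~verify $(\varepsilon, R, \GG^{N})$--controllability for $N$ large enough by projecting a ``connecting control'' $\bar\eta \coloneqq \dot{\bar u} + A\bar u + B(\bar u, \bar u) + h$, associated with any smooth path $\bar u \in C^{1}(\overline{I_{T}}, \D(A))$ from $u_{0}$ to $\hat u$, onto a finite--dimensional subspace of~$\GG^{N}$. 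The density hypothesis $\overline{\bigcup_{j \in \N} \GG^{j}} = H$ then makes the projection error in step~(iii) arbitrarily small, which suffices by continuous dependence of solutions on the forcing.

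The core new ingredient is the ${\tt L}$--extension in step~(i). The idea is to realize a generator $\BB(a, b)$ of $\FF_{\tt L}(E)$ (with $a \in \CC \subseteq E$ and $b \in E$) as the time--averaged effect of a two--scale $E$--valued control of the form
$\eta_{\delta}(t) = \eta_{0}(t) + \delta^{-1/2}[\phi(t/\delta)\, a + \psi(t/\delta)\, b]$,
with $\phi, \psi$ periodic zero--mean profiles tuned so that $\langle\phi\psi\rangle$ takes any prescribed scalar value. Splitting $u = \bar u_{\delta} + v_{\delta}$ with $\bar u_{\delta}$ carrying the fast oscillation and expanding $B(u,u)$, the averaged source that $v_{\delta}$ sees at leading order in~$\delta$ is $\langle\phi\psi\rangle\,\BB(a,b) + \langle\phi^{2}\rangle\, B(a,a) + \langle\psi^{2}\rangle\, B(b,b)$. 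The scalar $\langle\phi\psi\rangle$ carries \emph{no sign restriction}; this is precisely why, in contrast to the ${\tt B}$--extension of Theorem~\ref{T:IndE-E1-b} (where only the unsigned squares $\langle\phi_{j}^{2}\rangle \geq 0$ enter, forcing $\alpha_{j} \geq 0$ in Definition~\ref{D:FF}), the term $\BB(a,b)$, linear in $b$, is accessible with either sign and the corresponding condition in Definition~\ref{D:FF-l} does not carry any positivity constraint. The diagonal residuals $\langle\phi^{2}\rangle B(a,a) + \langle\psi^{2}\rangle B(b,b)$ lie in $H$ thanks to the explicit requirement $B(a,a), B(b,b) \in H$ in Definition~\ref{D:FF-l}, which is what makes the averaging scheme admissible; they are treated either by absorption into $\eta_{0}$ along with additional oscillating modes drawn from $\CC$, or by a one--step re--application of the same averaging on the same class of subspaces (this is the reason the statement of Theorem~\ref{T:AppCont-l} is phrased for $\GG^{1}$ rather than for $\GG^{0}$).

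The main obstacle is the rigorous justification of the limit $\delta \to 0$ in step~(i). In 3D, uniqueness of weak solutions to~\eqref{sys_u_E} is unavailable, so the convergence $u_{\delta}(T) \to u(T)$ in $V$ as $\delta \to 0$ cannot be read off from standard continuous dependence. Following the strong--solution framework of~\cite{Shirikyan06}, I would localize the argument on short time intervals on which strong solutions are guaranteed to exist (afforded by the $\D(A)$--regularity of~$\hat u$ and the $V$--regularity of~$u_{0}$), carry out the two--scale averaging there to produce an error estimate of the form $\norm{u_{\delta}(T) - u(T)}{V} \leq \omega(\delta)$ with $\omega(\delta) \to 0$, and patch the intervals together using a priori bounds on the $V$--norm of the target trajectory. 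Propagating the error in the $V$--norm rather than merely in~$H$ is the delicate technical point, since $V$--closeness at the final time is exactly what the $\varepsilon$--approximate controllability requires.
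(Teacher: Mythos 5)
Your overall architecture --- descend through the hierarchy $\GG^j$ via an ${\tt L}$-extension lemma, and realize $\BB(a,b)$ as the averaged effect of fast oscillations along $a$ and $b$ --- matches the paper's, and your observation that the sign-freedom of the cross-correlation $\langle\phi\psi\rangle$ is exactly what distinguishes the ${\tt L}$-extension from the ${\tt B}$-extension is correct. The paper implements the oscillation differently: instead of re-deriving a two-profile averaging lemma, it uses the polarization identity $\BB(a,b)=-B(\rho a-\rho^{-1}b,\rho a-\rho^{-1}b)+\rho^2 B(a,a)+\rho^{-2}B(b,b)$, so that the term to be imitated, $-B(\rho a-\rho^{-1}b,\rho a-\rho^{-1}b)$ with $\rho a-\rho^{-1}b\in E$, has exactly the form $E-{\rm conv}\{B(e,e)\mid e\in E\}$ already covered by Propositions~3.1--3.2 of~\cite{Shirikyan06}, while the term $\rho^{-2}B(b,b)$ is discarded as $\rho\to\infty$ by the perturbation estimate of~\cite[Remark~1.9]{Shirikyan06}. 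This is your scheme specialized to $\phi=\rho\chi$, $\psi=-\rho^{-1}\chi$, and it spares you from reproving the averaging step in the $\mathrm{3D}$ strong-solution framework, which is precisely the delicate point you flag at the end.

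The genuine gap is in your step~(i). By Cauchy--Schwarz, $|\langle\phi\psi\rangle|^2\le\langle\phi^2\rangle\langle\psi^2\rangle$, so if $\langle\phi\psi\rangle$ is of order one you cannot make both diagonal residuals small; making $\langle\psi^2\rangle B(b,b)$ negligible (which you must, since for general $b\in E$ the vector $B(b,b)$ lies in no controlled subspace) forces $\langle\phi^2\rangle\to\infty$, so a \emph{large} multiple of $B(a,a)\in\GG^1$ survives as an honest part of the control. It cannot be ``absorbed into $\eta_0$'' if $\eta_0$ takes values only in $E$, because in general $B(a,a)\notin E$. Consequently the extension lemma cannot be stated as ``$(\e,R,\FF_{\tt L}(E))$-controllable $\Rightarrow$ $(\e,R,E)$-controllable''; it must carry the space $\GG^1$ additively on both sides, as in Theorem~\ref{T:IndE-E1-l}: $(\e,R,\GG^1+\FF_{\tt L}(E))$-controllable $\Rightarrow$ $(\e,R,\GG^1+E)$-controllable. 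With your formulation the downward induction from $\GG^N$ would terminate at $\GG^0$, which is neither what is claimed nor attainable by this method precisely because of the surviving $B(a,a)$ terms; with the corrected formulation it terminates at $\GG^1+\GG^1=\GG^1$, using $\GG^1+\FF_{\tt L}(\GG^j)=\GG^{j+1}$. You half-recognize the issue when you remark that the diagonal residuals are ``the reason the statement is phrased for $\GG^1$'', but the fix must be built into the inductive statement itself rather than deferred to a final re-application of the averaging.
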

\begin{proof}
Proceeding as in~\cite[Section~2.2]{Shirikyan06} we can prove that system~\eqref{sys_u_E} is $\GG^{j+1}$-approximately controllable in time~$T$, provided~$j\in\N$ is big enough.
Now for any~$u=\sum\limits_{i=1}^M u_i W_i\in\GG^0$, we have that~$B(u,u)=\sum\limits_{i=1}^M u_iB(W_i,u)$, which implies that
$B(u,u)\in\FF_{\tt L}(\GG^0)=\GG^1$, for all~$j\ge0$. Since~$\GG^j\subseteq\GG^{j+1}$, for all~$j\ge0$, we have that~$\GG^1+\FF_{\tt L}(\GG^{j})=\GG^1+\GG^{j+1}=\GG^{j+1}$.
By the following Theorem~\ref{T:IndE-E1-l} it follows that system~\eqref{sys_u_E} is
$(\GG^1+\GG^{j})$-approximately controllable in time~$T$. Repeating the last argument, we conclude that system~\eqref{sys_u_E} is
$(\GG^1+\GG^{1})$-approximately controllable in time~$T$.
\null\hfill\null{\qed}\end{proof}
\begin{theorem}\label{T:IndE-E1-l}
Let $T$, $R$, and~$\e$ be positive constants. Then system~\eqref{sys_u_E} is
$(\e, R, \GG^1+E)$-controllable in time~$T$ if it is $(\e, R, \GG^1+\FF_{\tt L}(E))$-controllable in time~$T$.
\end{theorem}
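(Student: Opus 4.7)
I plan to imitate the high-frequency oscillation argument used by Shirikyan in the proof of Theorem~\ref{T:IndE-E1-b} (see~\cite{Shirikyan06}), adapting it so as to absorb the signed bilinear term $\alpha\BB(a,b)$ appearing in $\FF_L(E)$ instead of the quadratic term $-\alpha B(\zeta,\zeta)$ appearing in $\FF_B(E)$. The key new ingredient is a phase-shifted pair of fast oscillations, one in the direction $a\in\CC$ and one in $b\in E$, whose cross-correlation can be tuned to produce any signed $\BB(a,b)$ contribution in the averaged limit.

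By density and linearity of the decomposition of $\FF_L(E)$, it suffices to treat a piecewise constant $\tilde\eta\in L^\infty(I_T,\GG^1+\FF_L(E))$ whose value on a subinterval has the form $\xi+\eta_1+\alpha\BB(a,b)$ with $\xi\in\GG^1$, $\eta_1,b\in E$, $a\in\CC$, $\alpha\in\R$, and to absorb this single $\BB(a,b)$-term. I would prescribe the new control as $\eta_N=\eta_N^{\text{slow}}+\eta_N^{\text{fast}}$ with the fast part
\[
\eta_N^{\text{fast}}(t)=N\beta_1\cos(Nt)\,a+N\beta_2\cos(Nt+\phi)\,b,
\]
which belongs to $\GG^1+E$ since $a\in\CC\subset\GG^0\subset\GG^1$ and $b\in E$. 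Split the corresponding solution as $u_N=v_N+z_N$, where $z_N$ is the Duhamel response to $\eta_N^{\text{fast}}$ and satisfies $z_N(t)\simeq\beta_1\sin(Nt)\,a+\beta_2\sin(Nt+\phi)\,b$ for $N$ large. Standard averaging then yields $z_N\rightharpoonup 0$ weakly in $L^2(I_T,V)$, $\BB(v_N,z_N)\rightharpoonup 0$ weakly in $L^2(I_T,V')$, and
\[
B(z_N,z_N)\rightharpoonup \tfrac{\beta_1^2}{2}B(a,a)+\tfrac{\beta_1\beta_2\cos\phi}{2}\BB(a,b)+\tfrac{\beta_2^2}{2}B(b,b).
\]
Choosing $\beta_1\beta_2\cos\phi=-2\alpha$ cancels the $\BB(a,b)$-contribution in the effective averaged equation for $v_N$; the $B(a,a)$-term lies in $\GG^1$ by the observation used in the proof of Theorem~\ref{T:AppCont-l} that $B(u,u)\in\GG^1$ for every $u\in\GG^0$, so it is admissible as part of $\eta_N^{\text{slow}}$.

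The only remaining obstruction is the residual $\tfrac{\beta_2^2}{2}B(b,b)$, which in general is not in $\GG^1+E$. I would neutralise it by choosing $\beta_2=\mu$ small and $\beta_1=-2\alpha/(\mu\cos\phi)$ large: the residual is then $O(\mu^2)$ in $H$, and dropping it from $\eta_N^{\text{slow}}$ produces a bona fide $L^\infty(I_T,\GG^1+E)$-valued control at the price of an $O(\mu^2)$-error in $\|v_\infty(T)-\tilde u(T)\|_V$, controlled by continuous dependence of the strong solutions of~\eqref{sys_u_E} on the control. Arranging $\phi\in\pi\Z$ and $N\in(\pi/T)\Z$ guarantees $z_N(T)=0$, so that $u_N(T)=v_N(T)\to\tilde u(T)$ in $V$ along a diagonal subsequence $N\to\infty$, $\mu\to 0$, which is enough to absorb the $\e$-slack of the controllability hypothesis. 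The main obstacle will be the rigorous justification of the averaging within the 3D strong-solution framework: one must control the cross term $\BB(v_N,z_N)$ and ensure that $u_N$ stays in the class $C(\overline{I_T},V)\cap L^2(I_T,\D(A))$ even when the amplitude $\beta_1=O(1/\mu)$ is large, using the a priori regularity of the original trajectory $\tilde u$ supplied by the controllability hypothesis.
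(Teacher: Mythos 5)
Your proposal is correct and is essentially the paper's own argument: once you restrict to $\phi\in\pi\Z$, your phase-shifted fast control is an oscillation in the single direction $\beta_1 a\mp\beta_2 b\in E$, and expanding its averaged effect $\tfrac12 B(\beta_1 a\mp\beta_2 b,\beta_1 a\mp\beta_2 b)$ is exactly the polarization identity $\BB(a,b)=-B(\rho a-\rho^{-1}b,\rho a-\rho^{-1}b)+\rho^2B(a,a)+\rho^{-2}B(b,b)$ on which the paper's proof rests, with the same three-way treatment of the resulting terms (the large $B(a,a)$-part kept in the control since it lies in $\GG^1$, the small $B(b,b)$-part absorbed by continuous dependence on the data, and the remaining pure square handled by convexification). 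The rigorous 3D averaging and perturbation steps you flag as the main obstacle are precisely what the paper outsources to Shirikyan's Propositions~3.1--3.2 and Remark~1.9, so you should cite those rather than re-derive the weak convergences.
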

\begin{proof}
 Let us fix~$\hat\e>0$ and $\hat u\in \D(A)$. Let also~$(\xi_0,\xi_1)\in L^\infty(I_T,\GG^1)\times L^\infty(I_T,\FF_{\tt L}(E))$ be such that the corresponding solution for
 \begin{equation}\label{sys_u_01}
 \dot u+A u+B(u,u)+h=\xi_0+\xi_1,\quad u(0)=u_0, 
\end{equation}
satisfies
\begin{equation}\label{uThatu}
 \norm{u(T)-\hat u}{V}\le \hat{\e}. 
\end{equation}
We may write, for any $\rho>0$,
\begin{align*}
 \xi_1&=\eta+\sum_{i=1}^k\BB(a_i,b_i)=\eta+\sum_{i=1}^k \left(-B(\rho a_i-\rho^{-1}b_i)+\rho^2B(a_i)+\rho^{-2}B(b_i)\right)\\
 \end{align*}
 for suitable $k\in\N_0$, $\eta\in L^\infty(I_T,E)$, and suitable pairs $(a_i,b_i)\in L^\infty(I_T,\CC\times E)$. Therefore,
 \begin{align*}
 \xi_1&=\rho^2\eta_a +\eta_\rho + \rho^{-2}\eta_b
 \end{align*}
 with
 \begin{subequations}\label{etas0rho2}
 \begin{align}
 \eta_a&\coloneqq\sum_{i=1}^k B(a_i,a_i),\qquad \eta_b\coloneqq \sum_{i=1}^k B(b_i,b_i).\\
 \eta_\rho&\coloneqq \eta-\sum_{i=1}^k B(\rho a_i-\rho^{-1}b_i,\rho a_i-\rho^{-1}b_i).
\end{align} 
 \end{subequations}

Now we rewrite~\eqref{sys_u_01} as
\begin{equation}\label{sys_u_01a}
 \dot u+A u+B(u,u)+h=\xi_0+\rho^2\eta_a +\eta_\rho +\rho^{-2}\eta_b,\quad u(0)=u_0. 
\end{equation}
Since~\eqref{sys_u_01a} coincides with~\eqref{sys_u_01}, the solution~$u$ of~\eqref{sys_u_01a}
is independent of~$\rho$. Let us now consider the solution of the system
\begin{equation*}
 \dot w_\rho+A w_\rho+B(w_\rho,w_\rho)+h=\xi_0+\rho^2\eta_a +\eta_\rho ,\quad w_\rho(0)=u_0, 
\end{equation*}

The solution~$u$ is known (by Theorem's assumption) to exist for~$t\in\overline{I_T}$. We show now that the solution~$w_\rho$ also exists
for time~$t\in\overline{I_T}$, provided~$\rho$ is big enough.

Indeed, the difference~$z= u-w_\rho$ solves
\begin{equation}\label{sys_z}
 \dot z+Az+B(z,z)+\BB(u,z)=\rho^{-2}\eta_b,\quad z(0)=0, 
\end{equation}
and we know that~$u\in C(\overline{I_T},V)\subset L^4(I_T,H^1(\Omega,\R^3))$ and $\rho^{-2}\eta_b\in L^\infty(I_T,H)\subset L^2(I_T,H)$. Further we know that~$\hat z=0$ solves
system~\eqref{sys_z} with~$\eta_b=0$, for time~$t\in I_T$.
Therefore, from~\cite[Remark~1.9]{Shirikyan06}, we can conclude that there exists a unique solution for system~\eqref{sys_z}, for time~$t\in I_T$, provided
$\norm{\rho^{-2}\eta_b-0}{L^2(I_T,H)}$ is small enough. That is, provided~$\rho$ is big enough. Furthermore, we have that
\[
 \norm{u-w_\rho}{C(\overline{I_T},V)\bigcap L^2({I_T},\D(A))}=\norm{z}{C(\overline{I_T},V)\bigcap L^2({I_T},\D(A))}\le C \rho^{-2}\norm{\eta_b}{L^2(I_T,H)}
\]
for a suitable constant~$C$ depending only on~$\norm{u}{C(\overline{I_T},V)\bigcap L^2({I_T},\D(A))}$. See again~\cite[Remark~1.9]{Shirikyan06}.
In particular, for big enough $\rho>0$, we will have
\begin{equation}\label{wTuT}
 \norm{w_\rho(T)-u(T)}{V}\le \hat{\e}. 
\end{equation}

Observe that $\eta_\rho$ in~\eqref{etas0rho2} is in~$E-{\rm conv}\{B(e,e)\mid e\in E\}$,
where ${\rm conv}S$ stands for the convex cone generated by the subset~$S$, that is,
\[
 {\rm conv}S\coloneqq\left\{\sum_{i=1}^k\alpha_is_i\mid k\in\N,\, \alpha_i>0,\,s_i\in S\right\}.
\]
Hence by Proposition~3.2 in~\cite{Shirikyan06} there is $(\tilde\eta,\tilde\zeta)\in (L^\infty(I_T,E))^2$ so that
the corresponding solution for
\begin{equation*}
 \dot y_\rho+A(y_\rho+\tilde\zeta)+B(y_\rho+\tilde\zeta,y_\rho+\tilde\zeta)+h=\xi_0+\rho^2\eta_a +\tilde\eta,\quad y_\rho(0)=u_0, 
\end{equation*}
satisfies
\begin{equation}\label{wTyT}
 \norm{w_\rho-y_\rho}{C(\overline{I_T},V)}\le \hat{\e}. 
\end{equation}

\begin{remark} Actually, in~\cite[Proposition~3.2]{Shirikyan06}, it is assumed that~$\eta_\rho\in\FF_{\tt B}(E)$,
but following the proof in~\cite[Section~3.3]{Shirikyan06}, we can see that the 
 the proof is brought to the ``imitation'' (in short time intervals) of a constant control~$\eta_\rho\in E-{\rm conv}\{B(e,e)\mid e\in E\}$ (see also~\cite[Section~4.2, proof of Lemma~3.3]{Shirikyan06}).
\end{remark}

Now from~\cite[Proposition~3.1]{Shirikyan06} it follows that there exists a control~$\widehat\eta\in L^\infty(I_T,E)$ such that the solution of the system
\begin{equation*}
 \dot{\widehat y}_\rho+A\widehat y_\rho+B(\widehat y_\rho,\widehat y_\rho)+h=\xi_0+\rho^2\eta_a+\widehat\eta,\quad \widehat w_\rho(0)=u_0, 
\end{equation*}
satisfies
\begin{equation}\label{yThatyT}
 \norm{\widehat y_\rho(T)-y_\rho(T)}{V}\le \hat{\e}. 
\end{equation}

Finally, we observe that $\xi_0+\rho^2\eta_a+\widehat\eta\in L^\infty(I_T,\GG^1+E)$, and
\[
\norm{\widehat y_\rho(T)-\hat u}{V}\le 4\hat{\e},
\]
which can be concluded from~\eqref{uThatu}, \eqref{wTuT}, \eqref{wTyT}, and~\eqref{yThatyT}.
\null\hfill\null{\qed}\end{proof}

\section{The saturating set}\label{S:Rectangle}
Here we present a $({\tt L},\D(A))$-saturating set which consists of a finite number suitable eigenfunctions of the Stokes operator~$A$
in the $\mathrm{3D}$ rectangle
\[
\Omega =\mathrm R\coloneqq (0,L_1) \times (0,L_2) \times (0,L_3)
\]
under Lions boundary conditions, see~\eqref{LStokes}, where $L_1$, $L_2$, and~$L_3$ are positive real numbers. We follow the arguments
in~\cite[Section~3.5]{Phan-Thesis16}, where the case $L_1=L_2=L_3=\pi$ is considered. Notice that the vector length~$L=(L_1,L_2,L_3)$ plays a role
in the explicit computations, and different vector lengths may require slightly different arguments. Recall for example the case of a~$\mathrm{2D}$
rectangle~$(0,L_1) \times (0,L_2)$
considered in~\cite[Section~6.3]{Rod-Thesis08} where the case of a square~$L_1=L_2$ needs a particular consideration (see also~\cite{Rod06}). Recall also the case of the periodic
boundary conditions considered in~\cite[Section 2.3]{Shirikyan06} where in the case~$L_1=L_2=L_3$ it is possible to give an explicit form for
the $({\tt B},\D(A))$-saturating set (cf.~\cite[Remark~2.7]{Shirikyan06}, see also~\cite[Section~4]{Nersesyan10}).

\subsection{The saturating set}
We will present a saturating set for the rectangle under Lions boundary conditions, which consists of eigenfunctions of~$A$.

For a
given $k \in \N^3$, let $\#_0(k)$ stand for the number of vanishing components of $k$. A
complete system of eigenfunctions $\left\{  Y^{j(k),k}  \right\} $ is given by
\begin{subequations}\label{FamilyEig3DRect}
\begin{equation} \label{FamilyEig3DRect.Y}
 Y^{j\left(k\right),k} \coloneqq \begin{pmatrix}
w_1^{{j\left(k\right),k}} \sin \left( \frac{k_1 \pi x_1}{L_1} \right) \cos \left( \frac{k_2 \pi x_2}{L_2} \right) \cos \left( \frac{ k_3 \pi x_3}{L_3} \right)\\
w_2^{{j\left(k\right),k}} \cos \left( \frac{k_1 \pi x_1}{L_1} \right) \sin \left( \frac{k_2 \pi x_2}{L_2} \right) \cos \left( \frac{k_3 \pi x_3}{L_3} \right)\\ 
w_3^{{j\left(k\right),k}} \cos \left( \frac{k_1 \pi x_1}{L_1} \right) \cos \left( \frac{k_2 \pi x_2}{L_2} \right) \sin \left( \frac{k_3 \pi x_3}{L_3} \right)
 \end{pmatrix},\quad \#_0(k)\le1,
 \end{equation}
 with
 \begin{equation} 
\{ w ^{j(k),k}\mid j(k)\in\{1,2-\#_0(k)\}\}\subset\{k\}^{\perp_{[L]}}_0
\end{equation}
a linearly independent and orthogonal family and where
\begin{align}
\{k\}^{\perp_{[L]}}_0&\coloneqq\{z\in\R^3\setminus\{(0,0,0)\}\mid \left( z,\, k \right)_{[L]} =0,
\mbox{ and }z_i=0\mbox{ if }k_i=0\}, \\
(z,\,k)_{[L]} &\coloneqq \frac{ z_1 k_1}{L_1} + \frac{ z_2 k_2}{L_2} + \frac{z_3 k_3}{L_3}.
\end{align}
\end{subequations}
Notice that $2 - \#_0(k)$ is the dimension of the subspace $\{ k \}^{\perp_{[L]}}_0$ and that the orthogonality of the
family~$\{ w ^{j(k),k}\mid j(k)\in\{1,2-\#_0(k)\}\}$ implies that 
the family in~\eqref{FamilyEig3DRect.Y} is also orthogonal. The completeness of the system in~\eqref{FamilyEig3DRect.Y} is shown in \cite[Section 6.6]{PhanRod17}. 

\begin{example}
The eigenspace associated with a frequency vector~$k=(2,4,0)$, is the one spanned by the single eigenfunction
$ Y^{1,k}$, where we can choose~$w^{{1,k}}=C(-4L_1,2L_2,0)$ for any constant $C\ne0$. The eigenspace associated with a frequency
vector~$k=(2,4,5)\in\N_0^3$, is the one spanned by the
eigenfunctions $ Y^{1,k}$ and~$ Y^{2,k}$, where we can choose~$\{w^{1,k},w^{2,k}\}$ linearly independent in~$\linspan\{(-4L_1,2L_2,0),(-5L_1,0,2L_3)\}$.
\end{example}

Now we are able to present the saturating set in the following Theorem~\ref{T:satur3Drect}, whose proof is given in Section~\ref{sS:proofMainTh}. Before, we need
to derive some tools used in the proof.
\begin{theorem}\label{T:satur3Drect}
The set~$\CC \coloneqq\left\{ Y^{j(n),n} \left|\begin{array}{l}n\in\N^3,\quad 0 \le n_i \le 3,\\ \#_0(n) \le 1,\quad j(n) \in \{ 1, 2-\#_0(n) \}\end{array} \right. \right\}$
is $({\tt L},\D(A))$-saturating. 
\end{theorem}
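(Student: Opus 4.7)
The plan is to show that every eigenfunction $Y^{j(n),n}$ eventually appears in some $\GG^j$; since the family in~\eqref{FamilyEig3DRect.Y} is complete in $H$, this gives density. I would argue by induction on the height $|n|_\infty \coloneqq \max\{n_1,n_2,n_3\}$. The base case $|n|_\infty \le 3$ is built into the definition of $\CC$, so $Y^{j(n),n}\in\GG^0$ for every admissible such $n$. For the inductive step, given a target $n$ with $|n|_\infty = N > 3$, the goal is to produce every element of the (at most two-dimensional) eigenspace at frequency $n$ as an element of $\GG^1+\linspan\{\BB(a,b)\mid a\in\CC,\,b\in\GG^{j-1}\}$ for some $j$.

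The key computation, which I would isolate as a preliminary lemma, is the decomposition of $\BB(Y^{j(k),k},\,Y^{j(\ell),\ell})$ into eigenfunctions via trigonometric product-to-sum identities. Writing the nonlinear term and then applying the Leray projector $\Pi$, one obtains that $\BB(Y^{j(k),k},\,Y^{j(\ell),\ell})$ is a linear combination of at most eight eigenfunctions $Y^{j(c),c}$ with frequency vectors of the form $c=(\epsilon_1 k_1+\sigma_1\ell_1,\,\epsilon_2 k_2+\sigma_2\ell_2,\,\epsilon_3 k_3+\sigma_3\ell_3)$ with $\epsilon_i,\sigma_i\in\{+1,-1\}$. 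Moreover, an explicit formula expresses each output direction vector (which automatically lies in $\{c\}^{\perp_{[L]}}_0$ because $\Pi$ preserves the divergence-free tangential class) as a bilinear function of $w^{j(k),k}$, $w^{j(\ell),\ell}$, $k$, $\ell$, and $L$, analogous to what is done in \cite[Section~3.5]{Phan-Thesis16} for the cube and in \cite[Section~6.3]{Rod-Thesis08} for the $\mathrm{2D}$ rectangle.

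Equipped with this decomposition, the inductive step proceeds by choosing, for each target $n$ with $|n|_\infty = N>3$, a decomposition $n=k\pm\ell$ with $k\in\{0,1,2,3\}^3$ and $|\ell|_\infty<N$; this is possible because components above $3$ can be reduced by integer steps of size at most $3$. By the inductive hypothesis $Y^{j(\ell),\ell}\in\GG^{j-1}$, and $Y^{j(k),k}\in\CC$, so $\BB(Y^{j(k),k},Y^{j(\ell),\ell})\in\GG^j$. The admissibility condition $(B(a,a),B(b,b))\in H\times H$ in Definition~\ref{D:FF-l} is automatic for eigenfunctions, since these are smooth and $B$ maps $\D(A)\times\D(A)$ into $H$. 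The other seven sign combinations produced by $\BB$ all have $|c|_\infty<N+|k|_\infty\le N+3$ and, by keeping the inductive hypothesis in the strong form that all frequencies of height at most $N-1$ are exhausted, one subtracts off the unwanted terms using already generated elements of $\GG^{j-1}$.

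The main obstacle is non-degeneracy: one must show that by varying $k\in\{0,1,2,3\}^3$, the sign pattern, and the two choices of $j(k)$, the generated direction vectors at frequency $n$ span the full eigenspace $\{n\}^{\perp_{[L]}}_0$, which has dimension $2-\#_0(n)$. This is precisely where the requirement that $\CC$ contain frequencies up to $3$ in each coordinate, and both linearly independent members $j(k)\in\{1,2-\#_0(k)\}$ whenever available, is used: with smaller bases, the bilinear formula for the output direction collapses to a single line and fails to produce an independent companion. I would handle this by a case analysis on $\#_0(n)$ and on which coordinates of $n$ exceed $3$, treating separately the planar case $\#_0(n)=1$ (one-dimensional target eigenspace, single decomposition suffices) and the generic case $\#_0(n)=0$ (two independent decompositions needed, whose existence is verified by exhibiting two decompositions $n=k\pm\ell=k'\pm\ell'$ whose associated direction vectors are linearly independent in $\{n\}^{\perp_{[L]}}_0$).
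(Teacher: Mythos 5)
Your outline does match the paper's overall strategy---induction on $\max_i n_i$, the eight-term product-to-sum decomposition \eqref{new-form} of $\langle Y^k\cdot\nabla\rangle Y^m+\langle Y^m\cdot\nabla\rangle Y^k$, and a linear-independence argument for the output direction vectors---but the two points you defer are exactly where the proof lives, and as stated your induction does not close. First, the inductive hypothesis ``all frequencies of height at most $N-1$ are exhausted'' is not strong enough to remove the unwanted terms. To reach a target $n=(n_1,n_2,q+1)$ with all $n_i\ge 2$, taking $a=Y^{j(m),m}\in\CC$ with $m$ small and $b=Y^{j(k),k}$ with $k$ in the previous shell, the spurious frequencies $(|k_1\star_1 m_1|,|k_2\star_2 m_2|,|k_3\star_3 m_3|)$ include vectors such as $(n_1-2,n_2,q+1)$ and $(n_1,n_2-2,q+1)$, which lie in the \emph{same} shell $\max_i n_i=q+1$ as the target. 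The paper therefore has to order each shell internally---first the faces $\RR^{q+1}_i$ (themselves requiring a further lexicographic induction on $(n_1,n_2)$ inside Lemma~\ref{L:Part1_R}), then the edges $\LL^{q+1}_{m_1,m_2}$, then the corner $(q+1,q+1,q+1)$, i.e.\ Lemmas~\ref{L:Part1_R}--\ref{L:Part3_R}---and this inner bookkeeping is not optional. Relatedly, your allowance of decompositions $n=k-\ell$ in some coordinate produces spurious frequencies with a component $n_i+2k_i$ or $n_i+2\ell_i$, which can exceed the current shell and can then not be subtracted at all; only the all-plus decompositions (in the coordinates where both entries are nonzero) are usable.

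Second, the non-degeneracy you correctly identify as ``the main obstacle'' is the mathematical core of the theorem and cannot be dispatched by announcing a case analysis. One must exhibit, for each target, two concrete pairs $(m,w^m)$, $(k,w^k)$, compute the resulting vectors $z_{\alpha^1}$, $z_{\gamma^1}$ through the $\beta$-coefficients \eqref{BetaStar}, and verify $\det\bigl(n~z_{\alpha^1}~z_{\gamma^1}\bigr)\neq 0$; Lemma~\ref{lemma-lin-indp} is precisely the device that reduces the spanning claim for $\{\Pi\YY^n_{z_{\alpha^1}},\Pi\YY^n_{z_{\gamma^1}}\}$ to that $3\times3$ determinant, and the determinants that arise are polynomials in $q$ and $L$ whose non-vanishing for all $q\ge 3$ and all side lengths has to be checked case by case (a poorly chosen pair does degenerate). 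Finally, note that the paper does not treat the case $\#_0(n)=1$ by a fresh one-dimensional argument: via \eqref{Proj_RR2} it identifies $\BB(Y^k,Y^m)$ with the two-dimensional nonlinearity $\BB_2(W_{\overline k},W_{\overline m})$ and imports the known saturation result for the $2\mathrm{D}$ rectangle from \cite{Rod-Thesis08,Rod06}, which your sketch would have to either reproduce or cite.
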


\subsection{The expression for $\left( Y^k \cdot \nabla   \right) Y^m + \left( Y^m \cdot \nabla   \right) Y^k$}
Here we will present the expression for the coordinates of $\left( Y^{j(k),k} \cdot \nabla   \right) Y^{j(m),m} + \left( Y^{j(m),m} \cdot \nabla   \right) Y^{j(k),k}$
for given eigenfunctions as in~\eqref{FamilyEig3DRect.Y}. In order to shorten the following expressions and simplify the writing,
we will write
\[Y^{k}=Y^{j\left(k\right),k},\quad Y^{m}=Y^{j\left(m\right),m},\quad  w ^{k}=w ^{j(k),k},\quad  \mbox{and}\quad w ^{m}=w ^{j(m),m}\]
by omitting the indexes $j(k),j(m)$. We will also denote
\[
 \textstyle {\rm C}_i(k_i)\coloneqq \cos \left( \frac{k_i \pi x_i}{L_i} \right)\quad\mbox{and}\quad {\rm S}_i(k_i)\coloneqq \sin \left( \frac{k_i \pi x_i}{L_i} \right),\qquad i\in \{1,2,3\}.
\]

Using these notations, we find
\begin{equation*}
{\footnotesize
\left( Y^k \cdot \nabla   \right) Y^m = \textstyle\left( \begin{matrix}
Y^k \cdot w^m_1 \left( \ \begin{matrix}
\frac{m_1 \pi}{L_1} {\rm C}_1(m_1) {\rm C}_2(m_2) {\rm C}_3(m_3) \\
 -\frac{m_2 \pi}{L_2} {\rm S}_1(m_1) {\rm S}_2(m_2) {\rm C}_3(m_3) \\
 - \frac{m_3 \pi}{L_3} {\rm S}_1(m_1) {\rm C}_2(m_2) {\rm S}_3(m_3)
\end{matrix}   \right) \\
Y^k \cdot w^m_2 \left( \ \begin{matrix}
-\frac{m_1 \pi}{L_1} {\rm S}_1(m_1) {\rm S}_2(m_2) {\rm C}_3(m_3) \\
\frac{m_2 \pi}{L_2} {\rm C}_1(m_1) {\rm C}_2(m_2) {\rm C}_3(m_3) \\
 - \frac{m_3 \pi}{L_3} {\rm C}_1(m_1) {\rm S}_2(m_2) {\rm S}_3(m_3)
\end{matrix}   \right)\\
Y^k \cdot w^m_3 \left( \ \begin{matrix}
-\frac{m_1 \pi}{L_1} {\rm S}_1(m_1) {\rm C}_2(m_2) {\rm S}_3(m_3) \\
-\frac{m_2 \pi}{L_2} {\rm C}_1(m_1) {\rm S}_2(m_2) {\rm S}_3(m_3) \\
  \frac{m_3 \pi}{L_3}  {\rm C}_1(m_1) {\rm C}_2(m_2) {\rm C}_3(m_3)
\end{matrix}   \right)
\end{matrix}
\right),
}
\end{equation*}

To compute the coordinates of~$ \left( Y^k \cdot \nabla   \right) Y^m + \left( Y^m \cdot \nabla   \right) Y^k $ it will be useful to define
\begin{align} \label{BetaStar}
\beta^{\star_1\star_2\star_3}_{w^{k},m} \coloneqq \frac{\pi}{8} \textstyle\left( \star_1~\frac{w_1^{k} m_1}{L_1} \star_2~\frac{w_2^{k} m_2}{L_2} \star_3~\frac{w_3^{k}m_3 }{L_3}\right),
\quad\mbox{for}\quad (\star_1, \star_2, \star_3) \in \{+,- \}^3. 
\end{align}
As an illustration, we find the relations
$\beta^{+++}_{w^k,m} = \frac{\pi}{8} \left(\frac{w_1^{k} m_1}{L_1} + \frac{w_2^{k} m_2}{L_2} + \frac{w_3^{k} m_3}{L_3} \right)$, and
$\beta^{-+-}_{w^m,k} =   \frac{\pi}{8} \left(-\frac{w_1^{m} k_1}{L_1} + \frac{w_2^{m} k_2}{L_2} -\frac{w_3^{m} k_3}{L_3} \right)$.

From straightforward computations we can find
\begin{subequations}\label{CoorYY_Rect}
\begin{gather}
{\footnotesize
\begin{split} \label{1stCoor}
&\left( \left( Y^k \cdot \nabla   \right) Y^m + \left( Y^m \cdot \nabla   \right) Y^k \right)_1\\
&\hspace{3.9em}= +\left(w_1^{m}\beta^{+++}_{w^k,m} + w_1^{k} \beta^{+++}_{w^m,k} \right)
{\rm S}_1 (k_1 + m_1) {\rm C}_2(k_2 + m_2) {\rm C}_3 (k_3 + m_3)  \\
&\hspace{5em}+ \left( w_1^{m}\beta^{+++}_{w^k,m} - w_1^{k} \beta^{+++}_{w^m,k}\right)
{\rm S}_1 (k_1 - m_1) {\rm C}_2(k_2 - m_2) {\rm C}_3 (k_3 - m_3)  \\
&\hspace{5em}+ \left( w_1^{m}\beta^{++-}_{w^k,m}+ w_1^{k} \beta^{++-}_{w^m,k} \right)
{\rm S}_1 (k_1 + m_1) {\rm C}_2(k_2 + m_2) {\rm C}_3 (k_3 - m_3)  \\
&\hspace{5em}+  \left( w_1^{m}\beta^{++-}_{w^k,m} - w_1^{k} \beta^{++-}_{w^m,k} \right)
{\rm S}_1 (k_1 - m_1) {\rm C}_2(k_2 - m_2) {\rm C}_3 (k_3 + m_3)  \\
&\hspace{5em}+ \left( w_1^{m}\beta^{+-+}_{w^k,m} + w_1^{k} \beta^{+-+}_{w^m,k} \right)
{\rm S}_1 (k_1 + m_1) {\rm C}_2(k_2 - m_2) {\rm C}_3 (k_3 + m_3)  \\
&\hspace{5em}+ \left( w_1^{m}\beta^{+-+}_{w^k,m}- w_1^{k} \beta^{+-+}_{w^m,k} \right)
{\rm S}_1 (k_1 - m_1) {\rm C}_2(k_2 + m_2) {\rm C}_3 (k_3 - m_3)  \\
&\hspace{5em}+  \left( w_1^{m}\beta^{+--}_{w^k,m}+ w_1^{k} \beta^{+--}_{w^m,k} \right)
{\rm S}_1 (k_1 + m_1) {\rm C}_2(k_2 - m_2) {\rm C}_3 (k_3 - m_3)  \\
&\hspace{5em}+  \left( w_1^{m} \beta^{+--}_{w^k,m} - w_1^{k} \beta^{+--}_{w^m,k}\right)
{\rm S}_1 (k_1 - m_1) {\rm C}_2(k_2 + m_2) {\rm C}_3 (k_3 + m_3),
\end{split}
}
\end{gather}
\begin{gather}
{\footnotesize
\begin{split} \label{2ndCoor}
&\left(\left( Y^k \cdot \nabla   \right) Y^m + \left( Y^m \cdot \nabla   \right) Y^k  \right)_2\\
&\hspace{3.9em}= + \left( w_2^{m} \beta^{+++}_{w^k,m} + w_2^{k} \beta^{+++}_{w^m,k}\right)
{\rm C}_1 (k_1 + m_1) {\rm S}_2(k_2 + m_2) {\rm C}_3 (k_3 + m_3)  \\
&\hspace{5em}+\left( w_2^{m} \beta^{+++}_{w^k,m} - w_2^{k} \beta^{+++}_{w^m,k} \right)
{\rm C}_1 (k_1 - m_1) {\rm S}_2(k_2 - m_2) {\rm C}_3 (k_3 - m_3)  \\
&\hspace{5em}+  \left( w_2^{m} \beta^{++-}_{w^k,m} + w_2^{k} \beta^{++-}_{w^m,k} \right)
{\rm C}_1 (k_1 + m_1) {\rm S}_2(k_2 + m_2) {\rm C}_3 (k_3 - m_3)  \\
&\hspace{5em}+ \left( w_2^{m} \beta^{++-}_{w^k,m} - w_2^{k} \beta^{++-}_{w^m,k}  \right)
{\rm C}_1 (k_1 - m_1) {\rm S}_2(k_2 - m_2) {\rm C}_3 (k_3 + m_3)  \\
&\hspace{5em}+  \left( -w_2^{m} \beta^{+-+}_{w^k,m} + w_2^{k} \beta^{+-+}_{w^m,k} \right)
{\rm C}_1 (k_1 + m_1) {\rm S}_2(k_2 - m_2) {\rm C}_3 (k_3 + m_3)  \\
&\hspace{5em}+ \left( -w_2^{m} \beta^{+-+}_{w^k,m}- w_2^{k} \beta^{+-+}_{w^m,k} \right)
{\rm C}_1 (k_1 - m_1) {\rm S}_2(k_2 + m_2) {\rm C}_3 (k_3 - m_3)  \\
&\hspace{5em}+  \left( -w_2^{m} \beta^{+--}_{w^k,m}+ w_2^{k} \beta^{+--}_{w^m,k} \right)
{\rm C}_1 (k_1 + m_1) {\rm S}_2(k_2 - m_2) {\rm C}_3 (k_3 - m_3) \\
&\hspace{5em}+  \left( -w_2^{m} \beta^{+--}_{w^k,m}- w_2^{k} \beta^{+--}_{w^m,k} \right)
{\rm C}_1 (k_1 - m_1) {\rm S}_2(k_2 + m_2) {\rm C}_3 (k_3 + m_3), 
\end{split}
}
\end{gather}
\begin{gather}
{\footnotesize
\begin{split} \label{3rdCoor} 
&\left(\left( Y^k \cdot \nabla   \right) Y^m + \left( Y^m \cdot \nabla   \right) Y^k   \right)_3\\
&\hspace{3.9em}=+\left( w_3^{m} \beta^{+++}_{w^k,m} + w_3^{k}  \beta^{+++}_{w^m,k} \right)
{\rm C}_1 (k_1 + m_1) {\rm C}_2(k_2 + m_2) {\rm S}_3 (k_3 + m_3)  \\
&\hspace{5em}+\left( w_3^{m} \beta^{+++}_{w^k,m}  - w_3^{k}  \beta^{+++}_{w^m,k} \right)
{\rm C}_1 (k_1 - m_1) {\rm C}_2(k_2 - m_2) {\rm S}_3 (k_3 - m_3) \\
&\hspace{5em}+ \left( -w_3^{m} \beta^{++-}_{w^k,m}  + w_3^{k}  \beta^{++-}_{w^m,k}\right)
{\rm C}_1 (k_1 + m_1) {\rm C}_2(k_2 + m_2) {\rm S}_3 (k_3 - m_3) \\
&\hspace{5em}+ \left( -w_3^{m} \beta^{++-}_{w^k,m}  - w_3^{k}  \beta^{++-}_{w^m,k} \right)
{\rm C}_1 (k_1 - m_1) {\rm C}_2(k_2 - m_2) {\rm S}_3 (k_3 + m_3) \\
&\hspace{5em}+ \left( w_3^{m} \beta^{+-+}_{w^k,m} + w_3^{k}  \beta^{+-+}_{w^m,k} \right)
{\rm C}_1 (k_1 + m_1) {\rm C}_2(k_2 - m_2) {\rm S}_3 (k_3 + m_3)  \\
&\hspace{5em}+ \left( w_3^{m} \beta^{+-+}_{w^k,m}  - w_3^{k}  \beta^{+-+}_{w^m,k} \right)
{\rm C}_1 (k_1 - m_1) {\rm C}_2(k_2 + m_2) {\rm S}_3 (k_3 - m_3)  \\
&\hspace{5em}+ \left( -w_3^{m} \beta^{+--}_{w^k,m} + w_3^{k}  \beta^{+--}_{w^m,k} \right)
{\rm C}_1 (k_1 + m_1) {\rm C}_2(k_2 - m_2) {\rm S}_3 (k_3 - m_3) \\
&\hspace{5em}+ \left( -w_3^{m} \beta^{+--}_{w^k,m} - w_3^{k}  \beta^{+--}_{w^m,k} \right)
{\rm C}_1 (k_1 - m_1) {\rm C}_2(k_2 + m_2) {\rm S}_3 (k_3 + m_3) .
\end{split}
}
\end{gather}
\end{subequations}

Accordingly to Definition~\ref{D:satur-l},  we would need to compute the
orthogonal projection~$\BB(Y^k,Y^m)=\Pi\left(\left( Y^k \cdot \nabla   \right) Y^m + \left( Y^m \cdot \nabla   \right) Y^k\right)$, onto~$H$.
However, we will manage to use only the
coordinates in~\eqref{CoorYY_Rect} instead of the explicit expression for $\BB(Y^k,Y^m)$ (cf. Section~\ref{sS:remprojBB}).
The expression for $\BB(Y^k,Y^m)$ can be more
cumbersome than the expressions in~\eqref{CoorYY_Rect}. For the case~$L=(L_1,L_2,L_3)=(\pi,\pi,\pi)$,
the explicit expression for~$\BB(Y^k,Y^m)$ can be found in~\cite[Section~3.5.1]{Phan-Thesis16}.

\subsection{A difference between $\mathrm{2D}$ and $\mathrm{3D}$ cases}
For the case of $\mathrm{2D}$ Navier--Stokes equation on a rectangle under Lions boundary conditions, treated in~\cite{Rod06}, it holds that $B(W^n,W^n)=0$
for an eigenfunction~$W^n$ of the corresponding $\mathrm{2D}$ Stokes operator (cf.~\cite[Section~4.5]{Rod-Thesis08}). This can be seen from the fact that vectors fields in~$u\in H$
can be identified with a so-called stream function~$\phi_u$, as~$u=\nabla^\perp\phi_u$, and that we have the vorticity relations $\nabla^\perp\cdot u=-\Delta\phi_u$ and
$\nabla^\perp\cdot B(u,u)=-u\cdot\nabla(\nabla^\perp\cdot u)=\nabla\phi_u\cdot\nabla^\perp(\nabla^\perp\cdot u)=-\nabla\phi_u\cdot\Delta u$.
Thus~$\nabla^\perp\cdot B(W^n,W^n)=\lambda_n\nabla\phi_{W^n}\cdot W^n=\lambda_n\nabla\phi_{W^n}\cdot \nabla^\perp\phi_{W^n}=0$, where $\lambda_n$ is the eigenvalue associated
to~$W^n$, $\Pi(-\Delta) W_n=\lambda_n W_n$.

From Theorem~\ref{T:diff2D3D} below, in the case of the $\mathrm{3D}$ rectangle, the identity $B(Y^k,Y^k)=0$ does not hold for all eigenfunctions~$Y^k$ (cf.~the case of
the 1D Burgers equation studied in~\cite{PhanRod-ecc15}). 
\begin{theorem}\label{T:diff2D3D}
For an eigenfuntion~$Y^k=Y^{j(k),k}$ as in~\eqref{FamilyEig3DRect}, we have
 \begin{align*}
B(Y^k,Y^k)&\ne0,\qquad\mbox{if}\quad \#_0(k)=0,\\
B(Y^k,Y^k)&=0,\qquad\mbox{if}\quad \#_0(k)=1.
\end{align*}
\end{theorem}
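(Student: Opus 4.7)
The plan is to exploit the Helmholtz decomposition, under which $\Pi v=0$ if and only if $v=\nabla\pi$ for some $\pi\in H^1(\Omega)$, together with the vector calculus identity
\[
(u\cdot\nabla)u=\tfrac12\nabla|u|^2-u\times(\nabla\times u).
\]
Since $B(Y^k,Y^k)=\Pi((Y^k\cdot\nabla)Y^k)$, the question becomes whether $(Y^k\cdot\nabla)Y^k$, or equivalently $Y^k\times\omega^k$ with $\omega^k\coloneqq\nabla\times Y^k$, is a gradient.

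For the case $\#_0(k)=1$ I may assume without loss of generality that $k_3=0$; the definition of $\{k\}^{\perp_{[L]}}_0$ then forces $w_3^k=0$, so $Y^k$ has vanishing third component and no $x_3$-dependence. The orthogonality $w_1^k k_1/L_1+w_2^k k_2/L_2=0$ is exactly what makes a stream function $\phi(x_1,x_2)$ with $Y^k=(\partial_2\phi,-\partial_1\phi,0)$ well defined, and $\phi$ is an eigenfunction of the $\mathrm{2D}$ Dirichlet Laplacian: $-\Delta\phi=\lambda\phi$ with $\lambda=(k_1\pi/L_1)^2+(k_2\pi/L_2)^2$. Consequently $\omega^k=\lambda\phi\,e_3$, and
\[
Y^k\times\omega^k=\lambda\phi\,(Y^k_2,-Y^k_1,0)=-\lambda\phi\,\nabla\phi=-\tfrac{\lambda}{2}\nabla(\phi^2),
\]
which is a gradient, so $B(Y^k,Y^k)=0$; the cases $k_1=0$ and $k_2=0$ are handled analogously after relabelling.

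For the case $\#_0(k)=0$ I work from \eqref{CoorYY_Rect} with $m=k$ and $w^m=w^k$. Writing $\alpha_i\coloneqq w_i^k k_i/L_i$ and $A_i\coloneqq(k_i/L_i)^2$, the orthogonality $(w^k,k)_{[L]}=0$ yields $\alpha_1+\alpha_2+\alpha_3=0$, forcing $\beta^{+++}_{w^k,k}=0$ and collapsing the remaining three $\beta$'s to $-\tfrac{\pi}{4}\alpha_3$, $-\tfrac{\pi}{4}\alpha_2$, $\tfrac{\pi}{4}\alpha_1$. With $m=k$ every ``difference'' line in \eqref{CoorYY_Rect} drops out (either through the vanishing $\sin(k_i-m_i)=0$ or through a vanishing coefficient), and the highest-frequency $(2k_1,2k_2,2k_3)$-mode dies with $\beta^{+++}$. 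What survives are three ``pure'' modes $(2k_i,0,0)$, which are automatically gradients, and three ``mixed'' modes $(2k_1,2k_2,0)$, $(2k_1,0,2k_3)$, $(0,2k_2,2k_3)$; a direct curl check at each mixed mode yields the compatibility condition
\[
\alpha_l\left(\tfrac{\alpha_i}{A_i}-\tfrac{\alpha_j}{A_j}\right)=0,\qquad \{i,j,l\}=\{1,2,3\}.
\]

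A short algebraic case analysis then closes the proof. If all three $\alpha_i$ are nonzero, the conditions force $\alpha_i/A_i$ to equal a common constant $c$, so $\alpha_i=cA_i$; summing and using $\sum_i\alpha_i=0$ gives $c\sum_i A_i=0$, hence $c=0$ and $w^k=0$. If exactly one $\alpha_i$ vanishes, say $\alpha_1=0$, then $\alpha_2=-\alpha_3$ and the compatibility condition at $(2k_1,2k_2,0)$ reduces to $-\alpha_2\alpha_3/A_2=0$, forcing another $\alpha_i$ to vanish and hence $w^k=0$. Both alternatives contradict $w^k\in\{k\}^{\perp_{[L]}}_0\subset\R^3\setminus\{0\}$, so $(Y^k\cdot\nabla)Y^k$ cannot be a gradient and $B(Y^k,Y^k)\ne0$. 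The main obstacle is precisely this last algebraic analysis: without the two simplifying observations $\beta^{+++}_{w^k,k}=0$ and the substitution to the symmetric variables $\alpha_i,A_i$, the Fourier content of $\BB(Y^k,Y^k)$ extracted from \eqref{CoorYY_Rect} would be unwieldy and the three compatibility conditions far less transparent.
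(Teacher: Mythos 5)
Your proof is correct and takes essentially the same route as the paper's: in the hard case $\#_0(k)=0$ both arguments reduce $B(Y^k,Y^k)=0$ to the vanishing of $\curl\left(\left(Y^k\cdot\nabla\right)Y^k\right)$ and arrive at exactly the three scalar conditions of~\eqref{Curl} (each of your relations $\alpha_l\bigl(\tfrac{\alpha_i}{A_i}-\tfrac{\alpha_j}{A_j}\bigr)=0$ is a nonzero rescaling of the corresponding component there), after which your $\alpha_i=cA_i$ case analysis and the paper's triple-product identity are equivalent elementary endgames. The only stylistic differences are your use of $(u\cdot\nabla)u=\tfrac12\nabla|u|^2-u\times\curl u$ together with the stream function for $\#_0(k)=1$, where the paper simply exhibits the potential $g$ explicitly, and your (harmless) omission of the degenerate subcase in which two of the $\alpha_i$ vanish, which forces $w^k=0$ outright via $\sum_i\alpha_i=0$.
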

\begin{proof}
Indeed in the case $\#_0(k)=0$, since $0=(w_1^{k},k)_{[L]}=\frac{w_1^{k} k_1}{L_1} +~\frac{w_2^{k} k_2}{L_2} +~\frac{w_3^{k} k_3}{L_3}$,
from~\eqref{CoorYY_Rect} with $m = k$, we can rewrite the first coordinate in short form as follows 
\begin{gather*} 
\begin{split}
&\quad\left( \left( Y^k \cdot \nabla   \right) Y^k \right)_1\\
& = -\frac{\pi}{4} w_1^k \fractx{w_3^k k_3}{L_3}~{\rm S}_1(2k_1) {\rm C}_2(2k_2)
- \frac{\pi}{4} w_1^k \fractx{w_2^k k_2}{L_2}~{\rm S}_1(2k_1) {\rm C}_3(2k_3) +  \frac{\pi}{4} w_1^k \fractx{w_1^k k_1}{L_1}~{\rm S}_1(2k_1)\\
& = -\frac{\pi}{2}  w_1^k \sin\left(\fractx{2k_1 \pi x_1}{L_1} \right) \left( \fractx{w_3^k k_3}{L_3}  \cos^2\left(\fractx{k_2 \pi x_2}{L_2} \right)  +  \fractx{w_2^k k_2}{L_2}  \cos^2 \left( \fractx{k_3 \pi x_3}{L_3} \right) \right).
\end{split}
\end{gather*} 
Proceeding analogously for the other two coordinates, we obtain 

\begin{align}
 \label{Casekeqm}{\footnotesize
  \left( Y^k \cdot \nabla   \right) Y^k  = -\frac{\pi}{2}  \begin{pmatrix}
 w_1^k \sin\left(\frac{2k_1 \pi x_1}{L_1} \right) \left( \frac{w_3^k k_3}{L_3}  \cos^2\left(\frac{k_2 \pi x_2}{L_2} \right)  +  \frac{w_2^k k_2}{L_2}  \cos^2 \left( \frac{k_3 \pi x_3}{L_3} \right) \right)\\
w_2^k \sin\left(\frac{2k_2 \pi x_2}{L_2} \right) \left( \frac{w_3^k k_3}{L_3}   \cos^2\left(\frac{k_1 \pi x_1}{L_1} \right)  +  \frac{w_1^k k_1}{L_1} \cos^2 \left( \frac{k_3 \pi x_3}{L_3} \right) \right)  \\
    w_3^k \sin \left(\frac{2k_3 \pi x_3}{L_3} \right) \left( \frac{w_1^k k_1}{L_1}  \cos^2\left(\frac{k_2 \pi x_2}{L_2} \right)    +  w_2^k k_2  \cos^2\left(\frac{k_1 \pi x_1}{L_1} \right) \right) 
  \end{pmatrix}.
  }
  \end{align}
Assuming that $B(Y^{k},Y^k)=\Pi\left(\left( Y^k \cdot \nabla   \right) Y^k\right)= 0$, there would exist a function $g$ such that $  \left( Y^k \cdot \nabla   \right) Y^k  = \nabla g$
because $H^\bot = \{ \nabla g \mid g \in H^1 (\Omega, \R)\}$ (cf. \cite[Section 2.5]{Temam95}),
which implies that $\curl \left( \left( Y^k \cdot \nabla   \right) Y^k \right) = \curl (\nabla g) = 0$. That is, 
\begin{align} \label{Curl}{\footnotesize
0 = \curl \left( \left( Y^k \cdot \nabla   \right) Y^k \right)= \frac{\pi^2}{2}  \begin{pmatrix}
\frac{w_1^k k_1}{L_1} \sin\left(\frac{2k_2 \pi x_2}{L_2} \right)  \sin \left(\frac{2k_3 \pi x_3}{L_3} \right) \left( \frac{w_3^k~k_2}{L_2}~-~\frac{w_2^k k_3}{L_3}  \right) \\
\frac{w_2^k k_2}{L_2}  \sin \left(\frac{2k_3 \pi x_3}{L_3} \right) \sin\left(\frac{2k_1 \pi x_1}{L_1} \right) \left( \frac{w_1^k~k_3}{L_3}~-~\frac{w_3^k k_1}{L_1}  \right) \\
 \frac{w_3^k k_3}{L_3} \sin\left(\frac{2k_1 \pi x_1}{L_1} \right) \sin\left(\frac{2k_2 \pi x_2}{L_2} \right) \left( \frac{w_2^k~k_1}{L_1}~-~\frac{w_1^k k_2}{L_2}  \right)
\end{pmatrix}.
}
\end{align} 
We will prove that this equality cannot hold if~$\#_0(k)=0$. We start by proving that, in this case, no component of $w^k$ is vanishing.
Indeed, if for example $w_1^k = 0$, we would have $\frac{w_2^k k_2}{L_2}  = - \frac{w_3^k k_3}{L_3}$. Then,~\eqref{Curl} would give us 
\begin{align*}
0 &= \begin{pmatrix}
0 \\
  - \frac{w_2^k k_2}{L_2} \fractx{w_3^k k_1}{L_1}\sin \left(\fractx{2k_3 \pi x_3}{L_3} \right) \sin\left(\frac{2k_1 \pi x_1}{L_1} \right) \\
   \frac{w_3^k k_3}{L_3}  \fractx{w_2^k k_1}{L_1} \sin\left(\fractx{2k_1 \pi x_1}{L_1} \right) \sin\left(\frac{2k_2 \pi x_2}{L_2} \right)
\end{pmatrix} \\ &=  \fractx{w_3^k k_3}{L_3} \fractx{k_1}{L_1} \sin\left(\fractx{2k_1 \pi x_1}{L_1} \right)
 \begin{pmatrix}
0 \\
w_3^k \sin \left(\fractx{2k_3 \pi x_3}{L_3} \right) \\
   w_2^k   \sin\left(\fractx{2k_2 \pi x_2}{L_2} \right)
\end{pmatrix}. 
\end{align*}
Since $k \in \N_0^3$, it follows that necessarily~$(w_3^k)^2=w_2^kw_3^k=0$, which in turn leads us to $w^k =(0,0,0)$.
This contradicts the fact that by the definition $w^k\ne0$, because the family $\{w^{j(k),k} \mid j(k) \in \{1, 2-\#_0(k) \} \}$ must
be linearly independent\index{linearly independent}. Thus $w_1^k \neq 0$. A similar argument leads us to $w_2^k \neq 0$ and $w_3^k \neq 0$.

Now, since all components of $w^k$ are different from 0, from~\eqref{Curl}, we have 
\begin{align*}
 \frac{w_3^k~k_2}{L_2}~-~\frac{w_2^k k_3}{L_3} = \frac{w_1^k~k_3}{L_3}~-~\frac{w_3^k k_1}{L_1}   = \frac{w_2^k~k_1}{L_1}~-~\frac{w_1^k k_2}{L_2}  = 0, 
\end{align*}
that is $k^L \times w^k = 0$, with~$k^L\coloneqq (\frac{k_1}{L_1},\frac{k_2}{L_2},\frac{k_3}{L_3})$. Furthermore $w^k \cdot k^L = (w^k,k)_{[L]}= 0$ and from the
triple vector product relation
\begin{align*}
k^L \times \left( k^L \times w^k \right) = (k^L \cdot w^k) k^L - \left(k^L \cdot k^L \right) w^k,
\end{align*}
(cf.~\cite[Section~2.35]{Aris89}) it follows that $0=0-\left( k^L \cdot k^L \right) w^k = -\norm{k^L}{}^2w^k$ which leads to the
contradiction $w^k = 0$. Therefore we can conclude that $B(Y^{k},Y^{k}) \neq 0$ for all $k \in \N_0^3$. 

In the case $\#_0(k)=1$, for example if~$k_3=0$, then~$w^k_3=0$ and from~\eqref{Casekeqm} we obtain
\begin{align*}
   \left( Y^k \cdot \nabla   \right) Y^k  = -\frac{\pi}{2}  \begin{pmatrix}
 w_1^k \sin\left(\frac{2k_1 \pi x_1}{L_1} \right) \frac{w_2^k k_2}{L_2}  \\
w_2^k \sin\left(\frac{2k_2 \pi x_2}{L_2} \right) \frac{w_1^k k_1}{L_1}  \\
   0
  \end{pmatrix}=\nabla g
\end{align*}
with $g=\frac{w_1^kw_2^k}{4}\left(\frac{k_2 L_1}{k_1L_2}\cos\left(\frac{2k_1 \pi x_1}{L_1} \right)+\frac{k_1L_2}{k_2L_1}\cos\left(\frac{2k_2 \pi x_2}{L_2} \right)\right)$.
Thus $B(Y^{k},Y^{k}) = 0$, if $k_3=0$. A similar argument gives us that~$B(Y^{k},Y^{k})  = 0$ when~$k_i=0$, for~$i\in\{1,2\}$.
\null\hfill\null{\qed}\end{proof}

\subsection{Avoiding the computation of $\BB(Y^k,Y^m)$}\label{sS:remprojBB}
We present here an auxiliary result which will allow us to work with the coordinates in~\eqref{CoorYY_Rect}, avoiding to derive (and avoiding the need to work with) the
explicit expression for the projection~$\BB(Y^k,Y^m)=\Pi\left(\langle Y^k\cdot\nabla\rangle Y^m+\langle Y^m\cdot\nabla\rangle Y^k\right)$ (cf.~Definition~\ref{D:satur-l}).

With $k \in \N^3$, let us define the functions 
\begin{align*}
\psi_{1}^k  &= \psi_{1}^k (x) = \sin (\textstyle\frac{k_1\pi x_1}{L_1}) \cos (\textstyle\frac{k_2\pi x_2}{L_2}) \cos (\textstyle\frac{k_3\pi x_3}{L_3}), \\
\psi_{2}^k  &= \psi_{2}^k (x) = \cos (\textstyle\frac{k_1\pi x_1}{L_1}) \sin (\textstyle\frac{k_2\pi x_2}{L_2}) \cos (\textstyle\frac{k_3\pi x_3}{L_3}), \\
\psi_{3}^k  &= \psi_{3}^k (x) = \cos (\textstyle\frac{k_1\pi x_1}{L_1}) \cos (\textstyle\frac{k_2\pi x_2}{L_2}) \sin (\textstyle\frac{k_3\pi x_3}{L_3}), 
\end{align*}
and the vector functions 
\begin{equation}\label{cal_YY}
\YY_z^k = \begin{pmatrix}
z_1 \psi_{1}^k \\ z_2 \psi_{2}^k \\ z_3 \psi_{3}^k 
\end{pmatrix}, \qquad k \in \N^3,~~ z \in \R^3.
\end{equation}
we observe that for the eigenfunctions~$Y^k=Y^{j(k), k}$ in~\eqref{FamilyEig3DRect.Y}, we have 
\begin{align*}
Y^{j(k), k} = \YY^k_{w^{j(k).k}} \text{~with}~~ k \in \N^3,~~ \#_0(k) \le 1,~~ j(k) \in \{1, 2- \#_0(k) \} . 
\end{align*}
Observe also that if $m \neq k$ then $\left( \YY_z^k, \YY_w^m  \right)_{L^2(\Omega, \R^3)} = 0  $ for all $z, w \in \R^3$,
because we have $\left( \psi_i^k, \psi_i^m   \right)_{L^2((0,L_i),R)} = 0$ for all $i \in \{1,2,3 \}$. 
From~\eqref{CoorYY_Rect}, we observe that 
\begin{align} \label{new-form}
\left( Y^k \cdot \nabla   \right) Y^m + \left( Y^m \cdot \nabla   \right) Y^k &= \sum_{\begin{subarray}{l} n = (k (\star_1 \star_2 \star_3) m)^+ \\
(\star_1,\star_2, \star_3) \in \{-,+\}^3
 \end{subarray}}{\YY^n_{z^n}},
\end{align}
where
\begin{align}
 (k (\star_1 \star_2 \star_3) m)^+&\coloneqq (|k_1\star_1 m_1|,|k_2\star_2 m_2|,|k_3\star_3 m_3| ),
\end{align}
and for suitable vectors $z^n=(z^n_1,z^n_2,z^n_3) \in \R^3$ (depending on the parameters $k$, $m$, $w^m$ and $w^k$).  Thus the projection 
\begin{align*}
\BB (Y^m, Y^k)  =  \sum_{\begin{subarray}{l} n = (k (\star_1 \star_2 \star_3) m)^+ \\
(\star_1,\star_2, \star_3) \in \{-,+\}^3 \\
\#_0(n) \le 1, \\
j(n) \in \{1, 2- \#_0(n)\}
 \end{subarray}}{\alpha^{j(n),n} Y^{j(n),n}}
\end{align*}
satisfies, for any ~$n$, 
\begin{align*}
\sum_{j(n) \in \{1, 2- \#_0(n)\}} {\alpha^{j(n),n} Y^{j(n),n} } = \Pi \YY^n_{z^n}  = \Pi \begin{pmatrix}
z^n_1 \psi_1^n \\  z^n_2 \psi_2^n \\ z^n_3 \psi_3^n
\end{pmatrix}.
\end{align*}
\begin{lemma} \label{lemma-lin-indp}
Let us be given $\alpha, \gamma \in \R^3$ and $k \in \N_0^3$. Then the family $\{ \alpha, \gamma, k\}$ is linearly
independent if, and only if,  the family  $\{ \Pi \YY_\alpha^k,   \Pi \YY_\gamma^k \} $ is linearly independent. In either case
\[\linspan\{ \Pi \YY_\alpha^k,   \Pi \YY_\gamma^k \} =\linspan Y^{\{1,2\},k}.\]
\end{lemma}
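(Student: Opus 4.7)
The plan is to compute the projection $\Pi\YY_z^k$ explicitly for any $z\in\R^3$ via an elementary Helmholtz-type decomposition, and then reduce the lemma to linear algebra in $\R^3$. First I would split $z$ as $z=z_\parallel+z_\perp$, with $z_\parallel\coloneqq\frac{(z,k)_{[L]}}{|k^L|^2}\,k^L$ parallel to the vector $k^L\coloneqq(\frac{k_1}{L_1},\frac{k_2}{L_2},\frac{k_3}{L_3})$ (nonzero since $k\in\N_0^3$) and $z_\perp\coloneqq z-z_\parallel$ satisfying $(z_\perp,k)_{[L]}=z_\perp\cdot k^L=0$. A direct computation shows that $\YY_{z_\parallel}^k=\nabla\phi$ with $\phi\coloneqq-\frac{(z,k)_{[L]}}{\pi|k^L|^2}{\rm C}_1(k_1){\rm C}_2(k_2){\rm C}_3(k_3)$, hence $\YY_{z_\parallel}^k$ lies in the orthogonal complement of $H$ inside $L^2(\Omega,\R^3)$. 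On the other hand, $\YY_{z_\perp}^k$ has divergence $\pi(z_\perp,k)_{[L]}{\rm C}_1(k_1){\rm C}_2(k_2){\rm C}_3(k_3)=0$ and is tangent to $\partial\Omega$ (its $i$th component carries a factor ${\rm S}_i(k_i)$ vanishing on the faces $\{x_i=0\}\cup\{x_i=L_i\}$), so $\YY_{z_\perp}^k\in H$; therefore $\Pi\YY_z^k=\YY_{z_\perp}^k$.

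Next I would observe that the linear map $T\colon z\mapsto\YY_z^k$ from $\R^3$ to $L^2(\Omega,\R^3)$ is injective, because the three scalar functions $\psi_1^k,\psi_2^k,\psi_3^k$ are linearly independent on $\Omega$ (all $k_i$ are nonzero here). Its restriction to the $2$-dimensional plane $P\coloneqq\{v\in\R^3\mid(v,k)_{[L]}=0\}$ is a linear isomorphism onto $\linspan Y^{\{1,2\},k}$, which by~\eqref{FamilyEig3DRect} is exactly $T(P)$. Combining both steps, $\Pi\YY_\alpha^k=T(\alpha_\perp)$ and $\Pi\YY_\gamma^k=T(\gamma_\perp)$, so the pair $\{\Pi\YY_\alpha^k,\Pi\YY_\gamma^k\}$ is linearly independent if and only if $\{\alpha_\perp,\gamma_\perp\}$ is linearly independent in $P$, equivalently if and only if the triple $\{\alpha,\gamma,k^L\}$ (which is the natural reading of the lemma's $\{\alpha,\gamma,k\}$, since $\linspan\{k^L\}$ is precisely the complementary line to $P$ in $\R^3$) is linearly independent in $\R^3$. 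Under this common condition, the $2$-dimensional space $\linspan\{\Pi\YY_\alpha^k,\Pi\YY_\gamma^k\}$ is contained in the $2$-dimensional space $\linspan Y^{\{1,2\},k}$, so the two coincide.

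The main obstacle is the first step: producing the correct scalar potential $\phi$ and verifying the identity $\YY_{z_\parallel}^k=\nabla\phi$ by a bare-hands differentiation of a product of cosines. Once that Helmholtz-style decomposition is in hand, the injectivity of $T$ and a dimension count yield both assertions of the lemma essentially for free.
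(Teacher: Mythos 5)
Your proposal is, at its core, the same argument as the paper's: write $z\in\R^3$ as a component in the plane $\linspan\{w^{1,k},w^{2,k}\}$ plus a component along a complementary line on which $z\mapsto\YY^k_z$ produces a gradient (hence is annihilated by $\Pi$), and then reduce both linear-independence statements to the same $2\times2$ determinant. The paper takes the complementary line to be $\linspan\{k\}$ and quotes the identity $\YY^k_k=\nabla\bigl(-{\rm C}_1(k_1){\rm C}_2(k_2){\rm C}_3(k_3)\bigr)$; you take it to be $\linspan\{k^L\}$ with $k^L=(\tfrac{k_1}{L_1},\tfrac{k_2}{L_2},\tfrac{k_3}{L_3})$, after computing $\nabla\bigl(-{\rm C}_1(k_1){\rm C}_2(k_2){\rm C}_3(k_3)\bigr)=\YY^k_{\pi k^L}$. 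Your computation is the correct one: $\curl\YY^k_z=0$ exactly when $z$ is parallel to $k^L$, so $\YY^k_k$ is a gradient only when $L_1=L_2=L_3=\pi$ (the case of the thesis this section generalizes), and for a general rectangle $\Pi\YY^k_k=\YY^k_{k_\perp}\neq0$ in your notation.

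Because of this, what you actually prove is the equivalence with linear independence of $\{\alpha,\gamma,k^L\}$, and you are right to hesitate over calling this a ``natural reading'' of $\{\alpha,\gamma,k\}$: for $L_1,L_2,L_3$ not all equal the two conditions genuinely differ. Indeed, taking $\alpha=k$ makes $\{\alpha,\gamma,k\}$ dependent for every $\gamma$, while $\Pi\YY^k_\alpha=\YY^k_{k_\perp}$ and $\Pi\YY^k_\gamma$ are independent for suitable $\gamma$ whenever $k$ is not parallel to $k^L$. So your argument is a correct proof of a corrected statement, and it exposes a real issue in the paper's own step rather than introducing one; note, though, that the correction propagates to Section~\ref{sS:proofMainTh}, where the lemma is invoked through determinants of the form $\det(n\;z_{\alpha^1}\;z_{\gamma^1})$ --- under your version these should be $\det(n^L\;z_{\alpha^1}\;z_{\gamma^1})$ and would need to be rechecked. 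The remaining ingredients of your proof (the divergence and tangency checks giving $\YY^k_{z_\perp}\in H$, the injectivity of $T$, and the dimension count identifying $T(P)$ with $\linspan Y^{\{1,2\},k}$) are all sound.
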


\begin{proof}
Let us fix a basis\index{basis} $\{  w^{1,k}, w^{2,k}  \}$ for $\{k\}_0^{\bot} = \{k\}^{\bot}$. 

Given $\alpha, \gamma \in \R^3$, since $\{w^{1,k}, w^{2,k}, k\}$ is a basis in $\R^3$, we can write (in an unique way)
\begin{align} \label{ChangeBasis}
\begin{split}
\alpha &= \alpha^{1,k} w^{1,k} + \alpha^{2,k} w^{2,k} + \alpha_0 k, \\
\gamma  &= \gamma^{1,k} w^{1,k} + \gamma^{2,k} w^{2,k} + \gamma_0 k.
\end{split}
\end{align}
and it follows that 
\begin{align*}
\YY^k_{\alpha} &= \alpha^{1,k} Y^{1,k} + \alpha^{2,k}  Y^{2,k} + \alpha_0 \YY_k^{k}, \\
\YY^k_{\gamma} &=  \gamma^{1,k} Y^{1,k} + \gamma^{2,k} Y^{2,k} + \gamma_0 \YY_k^{k}.
\end{align*} 
Since $ \YY_k^{k} = \nabla (- \cos (\frac{k_1 \pi x_1}{L_1}) \cos (\frac{k_2\pi x_2}{L_2}) \cos(\frac{k_3\pi x_3}{L_3}))$,  we obtain 
\begin{equation} \label{Comb-Proj}
\begin{split}
 \Pi \YY^k_{\alpha} =  \alpha^{1,k} Y^{1,k} +  \alpha^{2,k}  Y^{2,k}, \\ 
\Pi \YY^k_{\gamma} =  \gamma^{1,k} Y^{1,k} +  \gamma^{2,k}  Y^{2,k}.
\end{split}
\end{equation}
Now, it is clear that~$\linspan\{ \Pi \YY_\alpha^k,   \Pi \YY_\gamma^k \} =\linspan Y^{\{1,2\},k}$ if, and only if, the
family~$\{ \Pi \YY_\alpha^k,   \Pi \YY_\gamma^k \}$ is linearly independent. Recall that~$\{Y^{1,k},Y^{2,k}\}$ is linearly independent by definition.

Observe that given $(r, s) \in \R^2$ such that
$r \Pi \YY^k_{\alpha} + s \Pi \YY^k_{\gamma} = 0$, we have (using \eqref{Comb-Proj})
that $(r \alpha^{1,k} + s \gamma^{1,k})  Y^{1,k}  + (r \alpha^{2,k} + s \gamma^{2,k})  Y^{2,k} = 0$ and, since $\{ Y^{1,k}, Y^{2,k}\}$ is linearly
independent, we find that
$\begin{pmatrix}
\alpha^{1,k} & \gamma^{1,k} \\
\alpha^{2,k} & \gamma^{2,k}
\end{pmatrix} 
\begin{pmatrix}
r \\
s 
\end{pmatrix}  
= \begin{pmatrix}
0 \\
0 
\end{pmatrix}.
$
Therefore 
\begin{equation} \label{Pilinind}
\{ \Pi \YY_\alpha^k,   \Pi \YY_\gamma^k\}\;\mbox{ is linearly independent if, and only if, }\;\det \begin{pmatrix}
 \alpha^{1,k} & \alpha^{2,k}  \\
  \gamma^{1,k} &\gamma^{2,k} 
\end{pmatrix} \neq 0. 
\end{equation}

Since $\{w^{1,k}, w^{2,k}, k\}$ is linearly independent, a similar argument (using~\eqref{ChangeBasis} together with $k=0w^{1,k}+0w^{2,k}+1k$) leads us to
\begin{equation} \label{Pilinind2}
\{ \alpha, \gamma, k \}\;\mbox{ is linearly independent if, and only if, }\;\det \begin{pmatrix}
 \alpha^{1,k} & \alpha^{2,k}  &  \alpha_0 \\
  \gamma^{1,k} &\gamma^{2,k} & \gamma_0 \\
  0 & 0 & 1
\end{pmatrix} \neq 0. 
\end{equation}
The Lemma follows from~\eqref{Pilinind} and~\eqref{Pilinind2}. 
\null\hfill\null{\qed}\end{proof}

\subsection{Proof of Theorem~\ref{T:satur3Drect}}\label{sS:proofMainTh}
Introducing the family of sets
\begin{equation}\label{setsSC_R}
\begin{array}{l}
\sS^q  \coloneqq \left\{ n\in\N^3 \mid 0 \le n_i \le q,~ \#_0(n) \le 1 \right\},\\
\CC^q  \coloneqq \left\{ Y^{j(n),n} \mid~n \in \sS^q ,~ j(n) \in \{ 1, 2-\#_0(n) \}  \right\},\end{array}\qquad q\in\N ,\quad q\ge 3 ,
\end{equation}
and recalling the sequence in Definition~\ref{D:satur-l}, we  can see that Theorem~\ref{T:satur3Drect} is a corollary of the following inclusions
\begin{equation}\label{Sat_Inc_R}
 \CC^3 \subseteq \GG^{0},\quad\mbox{and}\quad\CC^q \subseteq \GG^{q-1},\qquad \mbox{for all}\quad q\in\N,\quad q\ge3,
\end{equation}
which we will prove by induction.

\noindent{\bf Base step.}
By definition, $\CC=\CC^3 $ and~$\linspan\CC=\GG^0\subseteq \GG^{2}$. Therefore
\begin{equation}
 \mbox{Inclusions~\eqref{Sat_Inc_R} holds for }q=3. \label{BaseIndStep_R}
\end{equation}
\noindent{\bf Induction step.} The induction hypothesis is
\addtocounter{equation}{1}
\begin{equation}\label{IH.R}
 \CC^3 \subseteq \GG^{0} \mbox{ and the inclusion $\CC^q \subseteq \GG^{q-1}$ holds true for a given $q\in\N$, $q\ge3$}.\tag{{\bf IH.R}-eq.\theequation}
\end{equation}
We want to prove that~$\CC^{q+1} \subseteq \GG^{q}$. \\

 Notice that
\[
\CC^{q+1}  \coloneqq\left\{ Y^{1,n} \mid~n \in \sS^{q+1} ,~ \#_0(n)=1\right\}\bigcup\left\{ Y^{1,n},Y^{2,n} \mid~n \in \sS^{q+1} ,~ \#_0(n)=0 \}  \right\},
\]
We will consider the cases~$\#_0(n)=1$ and~$\#_0(n)=0$ separately.

{\em $\bullet$ The case~$n\in\CC^{q+1} $ and~$\#_0(n)=1$}.
Suppose that $k\in \N^3$, $\#_0(k)=1$, and~$k_3=0$. We can see that, up to a constant $C\ne0$,
$Y^k= C\begin{pmatrix}
W_{\overline k}\\0\end{pmatrix}$, where for simplicity we denoted~$Y^{k}=Y^{1,k}$ and~$W_{\overline k}\coloneqq\begin{pmatrix}
\textstyle\frac{-k_2\pi}{L_2}{\rm S}_1(k_1){\rm C}_2(k_2)\\
\textstyle\frac{k_1\pi}{L_1}{\rm C}_1(k_1){\rm S}_2(k_2)\end{pmatrix}$, with $\overline k\coloneqq(k_1,k_2)$. Notice
that~$W_{\overline k}$ is an eigenfunction of the Stokes operator in the $\mathrm{2D}$ rectangle~$\mathrm R_2=(0,L_1)\times(0,L_2)$, as observed
in~\cite[Section~2.2]{Rod06}. Now let also $m\in \N^3$, $\#_0(m)=1$, and~$m_3=0$. Then, we can see that
\begin{subequations}\label{Proj_RR2}
\begin{equation}\label{Proj0}
 \left( \left( Y^k \cdot \nabla   \right) Y^m + \left( Y^m \cdot \nabla   \right) Y^k \right)
 =\begin{pmatrix}\left( \left( W_{\overline k} \cdot \nabla_2   \right) W_{\overline m} + \left( W_{\overline m} \cdot \nabla_2   \right) W_{\overline k} \right)\\0\end{pmatrix}
\end{equation}
 where~$\nabla_2$ is the gradient on the rectangle~$\mathrm R_2$, that is, on the variables~$(x_1,x_2)$.
 
 Now, on one hand we can write
  \begin{equation}\label{ProjR}
 \left( \left( Y^k \cdot \nabla   \right) Y^m + \left( Y^m \cdot \nabla   \right) Y^k \right)=\BB(Y^k, Y^m)+\nabla q
 \end{equation}
 where~$\BB(Y^k, Y^m)\in H$ and~$q\in H^1(\mathrm R,\R^3)$. On the other hand we can write
 \begin{equation}\label{ProjR2}
 \left( W_{\overline k} \cdot \nabla_2   \right) W_{\overline m} + \left( W_{\overline m} \cdot \nabla_2   \right) W_{\overline k}=\BB_2(W_{\overline k}, W_{\overline m})+\nabla p
 \end{equation} 
 \end{subequations}
 where~$\BB_2(Y^k, Y^m)\in\{u\in L^2(\mathrm R_2,\R^2)\mid \p_{x_1}u_1+\p_{x_2}u_2=0 \mbox{ and }u\cdot(\nnn_1,\nnn_2)=0\}$ and~$p\in H^1(\mathrm R_2,\R^2)$.
  Therefore from~\eqref{Proj_RR2} it follows that necessarily
 \[
 \BB(Y^k, Y^m)=\begin{pmatrix}\BB_2(W_{\overline k}, W_{\overline m})\\0\end{pmatrix}\quad\mbox{and}\quad\nabla q=\begin{pmatrix}\nabla_2 p\\0\end{pmatrix}. 
 \]

 Notice that given~$x\in\p\mathrm R$, the normal~$\nnn_{x}$, to~$\mathrm R$  at~$x$, satisfies \\$\nnn_{x}=(\nnn_{x,1},\nnn_{x,2},\nnn_{x,3})=(\nnn_{x,1},\nnn_{x,2},0)$ if
 $x\eqqcolon(\overline x,x_3)\in\p\mathrm R_2\times(0,L_3)$, and $\nnn_{x}=(0,0,\pm 1)$ if~$x\in\mathrm R_2\times\{0,\,L_3\}$. Notice also that
 $\p\mathrm R_2\times(0,L_3)\bigcup\mathrm R_2\times\{0,\,L_3\}$ is dense in~$\p\mathrm R$.

From the results in~\cite[Section~6.3]{Rod-Thesis08} (see also~\cite[Section 7.1]{Rod06}, for $({\tt B},\D(A))$-saturating sets) we know that
if for all $q\ge3$ and $n\in\sS^{q+2} $,
with~$n_3=0$ and~$(n_1,n_2)\ne(q+2,q+2)$,
we have that~$W_{\overline n}\in\GG^{q+2-3+1}$, then for all $n\in\sS^{q+1} $, with $n_3=0$, we have that~$W_{\overline n}\in\GG^{q}$. Repeating the argument for the cases
$n_1=0$ and $n_2=0$, we arrive at
\begin{equation}\label{one0_R}
 \linspan\{Y^n\mid n\in\sS^{q+1} , \#_0(n)=1\}\subseteq \GG^{q}.
\end{equation}

{\em $\bullet$ The case~$n\in\CC^{q+1} $ and~$\#_0(n)=0$}. In this case~$n\in\N^3_0$.
We start by defining, again for~$q\ge 3$ and for some given~$m$, $m_1$, and~$m_2$ in~$\{1,2,3\}$, the index sets
%
\begin{equation}\label{setsRL_R}
\begin{split}
\RR^q_m &\coloneqq \left\{ n \in \sS^q   \mid n_m = q,~1 \le n_i \le q-1~\mbox{for }i \neq m \right\},\\
\LL^q_{m_1,m_2}& \coloneqq \left\{ n \in \sS^q    \mid ~n_{m_1} = q= n_{m_2},~m_1\ne m_2, 1 \le n_i \le q-1,~i \notin \{m_1,m_2\} \right\}.
\end{split}
\end{equation}

We define the set of eigenfunctions
\begin{align*}
\CC^q_{0} = \left\{ Y^{1,n},Y^{2,n} \mid~n \in \sS^q ,~\#_0(n)=0 \right\}.
\end{align*}
Notice that
\begin{equation} \label{subset-sq-2Drect}
 \begin{split}
\{n\in\sS^{q+1} \mid\#_0(n)=0\}  &=  \{n\in\sS^{q} \mid\#_0(n)=0\} {\textstyle\bigcup} \left( {\RR^{q+1}_1}\cup {\RR^{q+1}_2}\cup{\RR^{q+1}_3} \right)\\
&\quad{\textstyle\bigcup} \left( \LL^{q+1}_{1,2} \cup \LL^{q+1}_{2,3} \cup \LL^{q+1}_{3,1} \right) {\textstyle\bigcup} \{ (q+1, q+1, q+1) \}.
\end{split}
\end{equation}

It remains to prove that~$\CC^{q+1}_{0} \subset\GG^{q}$, which is a corollary of the following Lemmas~\ref{L:Part1_R}, \ref{L:Part2_R}, and~\ref{L:Part3_R}
which we will prove in the following Sections~\ref{ssS:Part1_R},~\ref{ssS:Part2_R},
and~\ref{ssS:Part3_R}.

\begin{lemma}\label{L:Part1_R}
 $Y^{j(n),n}\in\GG^{q}$ for all $n \in \bigcup \limits_{i = 1}^3 {\RR^{q+1}_i}$.
\end{lemma}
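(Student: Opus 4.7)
The plan is to construct, for each $n\in\RR^{q+1}_1$, two projections $\Pi\YY^n_\alpha,\Pi\YY^n_\gamma\in\GG^q=\FF_{\tt L}(\GG^{q-1})$ with $\{\alpha,\gamma,n\}$ linearly independent in $\R^3$; Lemma~\ref{lemma-lin-indp} will then give $\linspan\{Y^{1,n},Y^{2,n}\}\subseteq\GG^q$, as required. By the obvious permutation symmetry between the three coordinate directions, treating $\RR^{q+1}_1$ handles all of $\bigcup_{i=1}^{3}\RR^{q+1}_i$.

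Fix $n=(q+1,n_2,n_3)\in\RR^{q+1}_1$ with $1\le n_2,n_3\le q$. I would pick a small index $k=(1,k_2,k_3)\in\sS^3$ with $k_2,k_3\in\{1,2,3\}$ (so $\#_0(k)=0$ and $Y^k\in\CC=\CC^3$), and set $m\coloneqq(q,\,|n_2-k_2|,\,|n_3-k_3|)$. Then $m\in\sS^q$ and, whenever $\#_0(m)\le 1$, the induction hypothesis~\eqref{IH.R} places $Y^{j(m),m}\in\CC^q\subseteq\GG^{q-1}$, so $\BB(Y^k,Y^{j(m),m})\in\GG^q$. By formula~\eqref{new-form}, the field $\langle Y^k\cdot\nabla\rangle Y^{j(m),m}+\langle Y^{j(m),m}\cdot\nabla\rangle Y^k$ decomposes as a sum of eight vector fields $\YY^{n'}_{z^{n'}}$ indexed by $n'=(k(\star_1\star_2\star_3)m)^+$. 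The four $n'$ with $|k_1-m_1|=q-1$ lie in $\sS^q$, so their projections are in $\GG^{q-1}$ by~\eqref{IH.R}. Among the four $n'$ with $|k_1+m_1|=q+1$, one is $n$ itself and the other three are of the form $(q+1,p_2,p_3)$, which either have $\#_0\ge 1$ (hence are already in $\GG^q$ by~\eqref{one0_R}) or lie in $\RR^{q+1}_1$ with strictly smaller $p_2+p_3<n_2+n_3$.

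This structure drives a secondary induction on $n_2+n_3$ within $\RR^{q+1}_1$: after subtracting the already-known contributions, one is left with $\Pi\YY^n_\alpha\in\GG^q$ for one specific $\alpha\in\R^3$ assembled from the coefficients $\beta^{\star_1\star_2\star_3}_{w^{k},m}$ and $\beta^{\star_1\star_2\star_3}_{w^{m},k}$ read off from~\eqref{CoorYY_Rect}. Repeating the construction with a second admissible pair, for example a different choice of $(k_2,k_3)$ or of the basis vector $w^{j(k),k}\in\{k\}^{\perp_{[L]}}_0$, produces a second projection $\Pi\YY^n_\gamma\in\GG^q$; the flexibility afforded by the constraint $0\le k_i\le 3$ in the definition of $\CC^3$ ensures that two such admissible pairs exist with $\{\alpha,\gamma,n\}$ linearly independent.

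The main obstacle will be combinatorial bookkeeping, and not any deep analytic input: one must (i) treat the small base cases of the secondary induction (low $n_2+n_3$, and the boundary sub-cases $n_2\in\{1,2,3\}$ or $n_3\in\{1,2,3\}$) where some $|n_i-k_i|$ vanishes and forces $\#_0(m)=1$ or even $\#_0(m)=2$, requiring a separate (but easier) direct choice of $k,m$; and (ii) verify, by reading off $\alpha$ and $\gamma$ explicitly from~\eqref{CoorYY_Rect} and applying the determinantal criterion~\eqref{Pilinind2} from the proof of Lemma~\ref{lemma-lin-indp}, that the two chosen pairs $(k,w^k)$ really do produce an $\R$-linearly independent triple $\{\alpha,\gamma,n\}$, so that the full two-dimensional eigenspace $\linspan\{Y^{1,n},Y^{2,n}\}$ is recovered.
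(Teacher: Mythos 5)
Your plan is essentially the paper's own proof: the paper likewise realizes each $n\in\RR^{q+1}_i$ inside $\BB(Y^a,Y^b)$ with one factor a low mode from $\CC$ (it uses $(0,1,1)$, $(1,1,1)$ and relatives) and the other a frequency in $\sS^q$ covered by~\eqref{IH.R}, absorbs the seven spurious trigonometric modes via~\eqref{IH.R}, \eqref{one0_R} and a secondary induction (lexicographic on the two non-maximal components, rather than on their sum), and finishes with Lemma~\ref{lemma-lin-indp} after producing two coefficient vectors from two choices of $w$. The parts you defer as bookkeeping --- the boundary frequencies $(1,l,q+1)$, $(l,1,q+1)$ where $\#_0(m)$ degenerates, and the explicit non-vanishing of the determinants in~\eqref{Pilinind2}, which is where the constraints $2\le n_i\le q$ actually enter --- are exactly what the paper's computation carries out, and they do go through.
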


\begin{lemma}\label{L:Part2_R}
$Y^{j(n),n}\in\GG^{q}$ for all $n \in \LL^{q+1}_{1,2} \cup \LL^{q+1}_{2,3} \cup \LL^{q+1}_{3,1}$. 
\end{lemma}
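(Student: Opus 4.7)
The plan is to mimic the scheme used in Lemma~\ref{L:Part1_R}, suitably adapted to the fact that every $n\in\LL^{q+1}_{m_1,m_2}$ satisfies $\#_0(n)=0$, so the eigenspace at~$n$ is two-dimensional and we must produce \emph{two} linearly independent vectors in it. By symmetry permuting the roles of the coordinate labels it suffices to treat the case $(m_1,m_2)=(1,2)$, so $n=(q+1,q+1,n_3)$ with $1\le n_3\le q$.

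For each such $n$ I would search for two pairs $(a^{(\ell)},b^{(\ell)})$, $\ell\in\{1,2\}$, with $a^{(\ell)}\in\CC=\CC^3$ and $Y^{j(b^{(\ell)}),b^{(\ell)}}\in\GG^{q-1}$, such that $n$ appears as one of the frequencies in the decomposition~\eqref{new-form} of $\BB(Y^{a^{(\ell)}},Y^{j(b^{(\ell)}),b^{(\ell)}})$. The natural choices take $a^{(\ell)}_1=a^{(\ell)}_2=1$ and $b^{(\ell)}_1=b^{(\ell)}_2=q$, with $a^{(\ell)}_3\in\{0,1,2,3\}$ and $b^{(\ell)}_3\in\{1,\ldots,q\}$ chosen so that $\lvert a^{(\ell)}_3\pm b^{(\ell)}_3\rvert=n_3$ for some sign. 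The other frequencies produced by~\eqref{new-form} fall into one of three classes: frequencies of the form $(q\pm1,q\mp1,\cdot)$, which lie in $\RR^{q+1}_1\cup\RR^{q+1}_2$ and hence in $\GG^q$ by Lemma~\ref{L:Part1_R}; frequencies $(q-1,q-1,\cdot)$, which lie in $\sS^q$ and hence in $\GG^{q-1}$ by the induction hypothesis~\eqref{IH.R}; and, when $a^{(\ell)}_3\ne 0$, extra frequencies $(q+1,q+1,n_3')$ with $n_3'\ne n_3$, which are absorbed by performing an inner induction on the value of $n_3$, treating the elements of $\LL^{q+1}_{1,2}$ in a suitable order (for example, increasing $n_3$). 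Subtracting all these already-known contributions from $\BB(Y^{a^{(\ell)}},Y^{j(b^{(\ell)}),b^{(\ell)}})$ leaves $\Pi\YY^n_{z^{n,\ell}}\in\GG^q$, where $z^{n,\ell}\in\R^3$ can be read off from~\eqref{CoorYY_Rect} and~\eqref{BetaStar}.

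The conclusion $Y^{1,n},Y^{2,n}\in\GG^q$ then follows from Lemma~\ref{lemma-lin-indp} as soon as the pairs and, when relevant, the bases $\{w^{1,b^{(\ell)}},w^{2,b^{(\ell)}}\}$ of $\{b^{(\ell)}\}^{\perp_{[L]}}_0$ can be chosen so that $\{z^{n,1},z^{n,2},n\}$ is linearly independent in~$\R^3$. This non-degeneracy is the main obstacle of the argument: a direct inspection of the $\beta$-coefficients shows that with the single choice $a^{(\ell)}=(1,1,0)$ the relations $(w^{a^{(\ell)}},b^{(\ell)})_{[L]}=0$ and $w^{a^{(\ell)}}_3=0$ force every resulting $z^{n,\ell}$ to lie on the one-dimensional line $\linspan\{(L_1,-L_2,0)\}$, irrespective of $b^{(\ell)}$ and of $w^{j(b^{(\ell)}),b^{(\ell)}}$, so one pair of this form does not suffice. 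Complementing it with a second pair having $a^{(\ell)}_3\ne 0$, for instance $a^{(\ell)}=(1,1,1)$ and $b^{(\ell)}=(q,q,n_3\mp 1)$ (sign chosen so that $0\le b^{(\ell)}_3\le q$), breaks this degeneracy because the $\beta^{\star_1\star_2\star_3}$-coefficients in~\eqref{BetaStar} no longer vanish identically on the two-dimensional space $\{b^{(\ell)}\}^{\perp_{[L]}}_0$; the required linear independence is then delivered by an explicit $2\times 2$ determinant check in the spirit of~\eqref{Pilinind}.
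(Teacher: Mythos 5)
Your overall scheme is the one the paper uses: pair an element of $\CC$ with an eigenfunction whose frequency lies in $\sS^q$ so that the sum of the frequencies hits the target $n$, absorb the remaining frequencies in~\eqref{new-form} via~\eqref{IH.R}, \eqref{one0_R} and Lemma~\ref{L:Part1_R}, and conclude with Lemma~\ref{lemma-lin-indp}. Your diagnosis of the degeneracy of the pair $a=(1,1,0)$, $b=(q,q,b_3)$ is also correct and is a genuine insight: $w^{a}\propto(L_1,-L_2,0)$ forces $\beta^{++\star_3}_{w^{a},b}=0$, so the $w^{b}$-contribution to $z$ is killed and $z$ always lies on $\linspan\{(L_1,-L_2,0)\}$. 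The genuine gap is that the decisive step --- exhibiting a second vector $z^{n,2}$ with $\{n,z^{n,1},z^{n,2}\}$ linearly independent --- is only asserted, and for the choices you propose it demonstrably fails when $n_3=1$. There your second pair is $a=(1,1,1)$, $b=(q,q,0)$; since $b_3=0$, the eigenspace at $b$ is one-dimensional with $w^{b}\propto(L_1,-L_2,0)$, and because $a_1=a_2$ and $w^{b}_3=0$ one gets $\beta^{++\star_3}_{w^{b},a}=0$, so $z^{n,2}$ is again a multiple of $(L_1,-L_2,0)$ --- the same degenerate line as $z^{n,1}$. The alternative $b=(q,q,2)$ generates the extra frequency $(q+1,q+1,3)$, which is not yet available under your increasing-$n_3$ inner induction. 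Even for $n_3\ge2$, where the construction does go through, the final determinant works out to $(q^2-1)L_3(L_1+L_2)+n_3(n_3-2)L_1L_2$, and its positivity rests on $n_3(n_3-2)\ge0$; for a general length vector $L$ these determinants are not automatically nonzero, which is exactly the delicate point the lemma must settle, so it cannot be left as an unspecified ``$2\times2$ check.''

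The paper avoids the whole issue by a different pairing. For $n=(l,q+1,q+1)\in\LL^{q+1}_{2,3}$ it fixes the single frequency pair $k=(l,q-1,q)$, $m=(0,2,1)\in\CC$, for which $w^{m}=(0,L_2,-2L_3)$ gives $\beta^{+++}_{w^{m},k}=-\frac{\pi}{8}(q+1)\neq0$ for every $q$; the $w^{k}$-contribution to $z$ therefore never drops out, and the two independent vectors are obtained by varying $w^{k}$ over the \emph{two-dimensional} eigenspace $\{k\}^{\perp_{[L]}}_0$ (the paper takes $w^{k}=(0,L_2q,L_3(1-q))$ and $w^{k}=(L_1q,0,-L_3l)$). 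The resulting determinant, $-q(q+1)^2\bigl[L_2L_3l^2+L_1L_3(q+1)(q-2)+L_1L_2(q^2-1)\bigr]$, is a sum of same-signed terms and hence nonzero for all $L$, all $1\le l\le q$ and all $q\ge3$. To repair your argument you would need either to re-choose the pairs (and the induction order in $n_3$) so that the final determinants are manifestly sign-definite, or to switch, as the paper does, to varying the eigenvector inside a fixed two-dimensional eigenspace rather than varying the frequency pair.
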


\begin{lemma}\label{L:Part3_R}
 $Y^{\{1,2\},(q+1,q+1,q+1)}\subset\GG^{q}$.
\end{lemma}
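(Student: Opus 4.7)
The plan is to realize both basis elements of the two-dimensional eigenspace $Y^{\{1,2\},(q+1,q+1,q+1)}$ as elements of $\GG^q$ by applying the bilinear form $\BB$ to carefully chosen pairs $(a,b)$ with $a\in\CC$ and $b\in\GG^{q-1}$, as prescribed by Definition~\ref{D:FF-l}. The game is to pick $k\in\CC^3$ and $m\in\sS^q$ so that exactly one of the eight sign combinations $(\star_1,\star_2,\star_3)\in\{+,-\}^3$ in~\eqref{new-form} yields the target frequency $n=(q+1,q+1,q+1)$, while the other seven yield frequencies already known to lie in $\GG^q$: either in $\sS^q$ (by the induction hypothesis~\eqref{IH.R}), in $\RR^{q+1}_i$ (by Lemma~\ref{L:Part1_R}), in $\LL^{q+1}_{i,j}$ (by Lemma~\ref{L:Part2_R}), or in $\{n\in\sS^{q+1}\mid\#_0(n)=1\}$ (by~\eqref{one0_R}).

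The naive pair $k=(1,1,1)$, $m=(q,q,q)$ fails because $m=qk$ forces $(w^{j(k),k},m)_{[L]}=0$ and $(w^{j(m),m},k)_{[L]}=0$, making the $+++$-coefficient vanish. The remedy is to take $k=(2,1,1)\in\CC^3$ and $m=(q-1,q,q)\in\sS^q$, which are admissible whenever $q\ge 3$ and satisfy $k+m=(q+1,q+1,q+1)$ with $k$ not parallel to $m$. Inspection of~\eqref{setsRL_R} for the other seven combinations $(k(\star_1\star_2\star_3)m)^+$ confirms their absorption into the categories listed above. Hence, modulo $\GG^q$,
\[
\BB(Y^{j(k),k},Y^{j(m),m})\equiv \Pi\YY^n_{z^n}\quad\text{with}\quad z^n=\tfrac{\pi}{8}\bigl[(w^{j(k),k},m)_{[L]}\,w^{j(m),m}+(w^{j(m),m},k)_{[L]}\,w^{j(k),k}\bigr];
\]
using the defining orthogonality relations for $(2,1,1)$ and $(q-1,q,q)$, the two $[L]$-inner products simplify to nonzero multiples of $w_1^{j(k),k}$ and $w_1^{j(m),m}$ respectively.

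By Lemma~\ref{lemma-lin-indp}, the conclusion reduces to exhibiting two index choices for which the associated triples $\{z^n_{(j,j')},z^n_{(j'',j''')},n\}$ are linearly independent in $\R^3$. I would fix the explicit bases $w^{1,k}=(-L_1,2L_2,0)$, $w^{2,k}=(-L_1,0,2L_3)$ of $\{k\}^{\perp_{[L]}}$ and $w^{1,m}=(qL_1,-(q-1)L_2,0)$, $w^{2,m}=(qL_1,0,-(q-1)L_3)$ of $\{m\}^{\perp_{[L]}}$, and verify that the asymmetric pair $(j(k),j(m))=(1,2)$ and $(2,1)$ leads to a $3\times3$ determinant proportional to $-(q+1)^2\bigl[(q-1)L_1(L_2+L_3)+(q-3)L_2L_3\bigr]$, which is strictly nonzero for every $q\ge 3$ and every positive length triplet.

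The main obstacle I anticipate is exactly this linear-independence verification: the symmetric choices $(1,1)$ and $(2,2)$ produce $z^n$'s living in single coordinate planes, and in the boundary case $q=3$ they collapse to the common direction $(L_1,0,0)$, so they cannot by themselves span the eigenspace. The asymmetric pair is what genuinely mixes the two natural coordinate planes tied to $w^{1,k}$ and $w^{2,k}$ and avoids this degeneracy. Should the chosen pair ever fail for some exotic aspect ratio, the coordinate-symmetric variants $k=(1,2,1),\ m=(q,q-1,q)$ or $k=(1,1,2),\ m=(q,q,q-1)$ are available to supply any missing direction, completing the induction step.
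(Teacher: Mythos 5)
Your proposal is correct and follows essentially the same strategy as the paper's proof: choose $(k,m)$ with $k+m=(q+1,q+1,q+1)$ so that the remaining seven frequencies $(k(\star_1\star_2\star_3)m)^+$ are absorbed via the induction hypothesis, \eqref{one0_R}, and Lemmas~\ref{L:Part1_R}--\ref{L:Part2_R}, then apply Lemma~\ref{lemma-lin-indp} to two coefficient vectors forming a nonzero determinant with $n$. The only difference is the concrete choice of generators --- the paper takes $k=(q,q-1,q)$, $m=(1,2,1)$ and varies $w^{k}$, whereas you take $k=(2,1,1)$, $m=(q-1,q,q)$ and vary the basis-index pair --- and your determinant $-(q+1)^2\left[(q-1)L_1(L_2+L_3)+(q-3)L_2L_3\right]$ checks out, including the correctly identified degeneracy of the symmetric index pairs at $q=3$.
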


Observe that, from~\eqref{one0_R} and Lemmas~\ref{L:Part1_R}, \ref{L:Part2_R}, and~\ref{L:Part3_R},  it follows that
\begin{align} \label{res-final-3Drect}
Y^{j(n),n} \in \GG^{q} \quad \text{for all}\quad n \in \sS^{q+1} .
\end{align}
which implies that~$\CC^{q+1} \subseteq\GG^{q}$.
Therefore, we have just proven that~\eqref{IH.R} implies that~$\CC^{q+1} \subseteq\GG^{(q+1)-1}$.
Then by induction, using~\eqref{BaseIndStep_R}, it follows that~\eqref{Sat_Inc_R}
holds true, which implies the statement of Theorem~\ref{T:satur3Drect}.\null\hfill\null{\qed}

\subsubsection{Proof of Lemma~\ref{L:Part1_R}}\label{ssS:Part1_R}
We proceed into \ref{Step3_3Drect} main steps:
\begin{enumerate}
\renewcommand{\theenumi}{{\sf\arabic{enumi}}} 
 \renewcommand{\labelenumi}{} 
\item $\bullet$~Step~\theenumi:\label{Step2_3Drect} {\em~Generating  $Y^{j(n),n}$ with $n \in\{(1,l, q+1),(l,1, q+1)\mid 0<l\le q\}$.}
\item $\bullet$~Step~\theenumi:\label{Step3_3Drect} {\em~ Generating $Y^{j(n),n}$ with $n\in\{(n_1,n_2, q+1)\mid 2 \le n_1 \le q \mbox{ and }2 \le n_2 \le q\}$.}
\end{enumerate}

\bigskip\noindent
{$\bullet$~Step~\ref{Step2_3Drect}: {\em~Generating the family $Y^{j(n),n}$ with $n = (1,l, q+1)$ or $n = (l,1, q+1)$.} We start with~$n = (1,l, q+1)$ and proceed by induction on~$l$.

\noindent{\bf Base step.} We will prove that
\begin{equation}\label{11_ok_R}
Y^{\{1,2 \},(1,1, q+1)} \subset  \GG^{q}.
\end{equation}

To generate~$n=(1,1, q+1)$ we choose
\begin{align*}
k &=(1,0,q),& m &=(0,1,1),\\ w^k &= (L_1q,0,-L_3),& w^m &= (0,L_2,-L_3),
\end{align*}

From~\eqref{CoorYY_Rect}, this choice gives us 
\begin{align*}
\left( Y^k \cdot \nabla   \right) Y^m + \left( Y^m \cdot \nabla   \right) Y^k = \YY^{(1,1,q+1)}_{z_{\alpha^1}} + \YY^{(1,1,q-1)}_{z_{\alpha^2}}, 
\end{align*}
for suitable $z_{\alpha^1}, z_{\alpha^2} \in \R^3$.
By the induction hypothesis in assumption~\\\eqref{IH.R}, we have $Y^{\{1,2 \},(1,1,q-1)}\coloneqq\{Y^{1 ,(1,1,q-1)},Y^{2 ,(1,1,q-1)}\} \subseteq \GG^{q-1}
\subseteq \GG^{q}$, which implies that~$\Pi \YY^{(1,1,q-1)}_{z^{\alpha^2}} \in \GG^{q}$.  Hence, we can conclude that
$\Pi \YY^{(1,1,q+1)}_{z^{\alpha^1}} \in \GG^{q}$. 
Next, we can compute the vector $z_{\alpha^1}$ as follows: from
\begin{align*}
\beta_{w^k,m}^{\star_1\star_2+} = -\frac{\pi}{8},\quad\beta_{w^k,m}^{\star_1\star_2-} = \frac{\pi}{8},\quad\beta_{w^m,k}^{\star_1\star_2+} = -\frac{\pi}{8}q,\quad\beta_{w^m,k}^{\star_1\star_2-} = \frac{\pi }{8}q,
\end{align*}
with $(\star_1,\star_2)\in \{+,-\}^2$, we get 
\begin{gather}
\begin{split}
 z_{\alpha^1}
&= {\footnotesize\begin{pmatrix}
0 + {L_1}q\left( \beta_{w^m,k}^{+++} - \beta_{w^m,k}^{++-} + \beta_{w^m,k}^{+-+} - \beta_{w^m,k}^{+--} \right) \\
{L_2} \left( \beta_{w^k,m}^{+++} + \beta_{w^k,m}^{++-} \sign(0-1) - \beta_{w^k,m}^{+-+} \sign(0-1)- \beta_{w^k,m}^{+--} \right) + 0 \\
-{L_3} \left( \beta_{w^k,m}^{+++} + \beta_{w^k,m}^{+-+} - \beta_{w^k,m}^{+--} - \beta_{w^k,m}^{++-} \right) - {L_3} \left( \beta_{w^m,k}^{+++} + \beta_{w^m,k}^{+-+} - \beta_{w^m,k}^{+--} - \beta_{w^m,k}^{++-} \right) 
\end{pmatrix}  } \notag\\
&= \frac{\pi }{2}\begin{pmatrix}
- L_1q^2 \\
L_2 \\
L_3(q+1)
\end{pmatrix},\label{sign}
\end{split}
\end{gather}

\begin{remark}
The factors~$\sign(0-1)=\sign(k_2-m_2)$ appearing in~\eqref{sign} are due to the fact that the vector functions~$\YY_z^n$ in~\eqref{cal_YY} are defined for nonnegative frequencies~$n\in\N^3$, and
in~\eqref{CoorYY_Rect} the frequencies may be negative. To guarantee nonnegative frequencies we can just rewrite~\eqref{CoorYY_Rect} by replacing
each $\mathrm S_i(k_i-m_i)$ by its equivalent~$\sign(k_i-m_i)\mathrm S_i(|k_i-m_i|)$. Also, recall that~$\mathrm C_i(|k_i-m_i|)=\mathrm C_i(k_i-m_i)$.
\end{remark}

Next, we choose
\begin{align*}
k &=(1,0,q-1),& m &=(0,1,2),\\ w^k &= (L_1(q-1),0,-L_3),& w^m &= (0,2L_2,-L_3),
\end{align*}
which gives us 
\begin{align*}
\left( Y^k \cdot \nabla   \right) Y^m + \left( Y^m \cdot \nabla   \right) Y^k = \YY^{(1,1,q+1)}_{z_{\gamma^1}} + \YY^{(1,1,q-2)}_{z_{\gamma^2}}, 
\end{align*}
for suitable $z_{\gamma^1}, z_{\gamma^2} \in \R^3$.
Again from assumption~\eqref{IH.R} we have $Y^{\{1,2\},(1,1,q-2)}  \subseteq \GG^{q-1}$, and we can conclude that 
 $\Pi \YY^{(1,1,q+1)}_{z^{\gamma^1}} \in \GG^{q}$. From~\eqref{CoorYY_Rect} we find
\begin{align*}
&~z_{\gamma^1}= \frac{\pi }{2}
\begin{pmatrix}
-  L_1(q-1)^2 \\
-4 L_2 \\
 L_3(q+1)
\end{pmatrix}.
\end{align*}
In order to use Lemma~\ref{lemma-lin-indp}, we observe that the family $\{z_{\alpha^1},~z_{\gamma^1},~(1,1,q+1)\}$  in linearly independent, which follows from
\begin{align*}
\det(n~z_{\alpha^1}~z_{\gamma^1}) &= \frac{\pi^2}{4}
\det \begin{pmatrix}
1 & -L_1 q^2 & -L_1 (q-1)^2\\
1 & -L_2  & -4 L_2 \\
q+1 &L_3(q+1) &  L_3 (q+1)
\end{pmatrix}\\
&= \frac{\pi^2}{4} (q+1) \left( L_1(L_2+L_3)(2q-1) +3L_1L_2q^2 + 3L_2L_3 \right)> 0. 
\end{align*}
Therefore Lemma~\ref{lemma-lin-indp} give us  
\begin{align} \label{res-step21-3Drect}
Y^{\{1,2\},(1,1,q+1)} \subseteq \GG^{q}.
\end{align}

\noindent{\bf Induction step.}
Now let us assume that
\addtocounter{equation}{1}
\begin{equation}\label{IH.R-1l.l1}
Y^{\{1,2 \},(1,l-2, q+1)} \subseteq  \GG^{q}, \quad\mbox{for a given }l,\quad 2\le l\le q.\tag{{\bf IH.R}\ref{Step2_3Drect}-eq.\theequation}
\end{equation}
Notice that~\eqref{one0_R} and~\eqref{res-step21-3Drect} give us
\begin{equation}\label{01_ok_R}
Y^{\{1,2 \},(1,l, q+1)} \subseteq  \GG^{q},\quad\mbox{for all}\quad l\in\{0,1\}.
\end{equation}

In order to generate~$Y^{\{1,2 \},(1,l, q+1)}$ we choose
\begin{align*}
k &=(1,l-1,q),& m &=(0,1,1),\\ w^k &= (0,L_2q,L_3(1-l)),& w^m &= (0,L_2,-L_3),
\end{align*}
This choice gives us 
\begin{align*}
\left( Y^k \cdot \nabla   \right) Y^m + \left( Y^m \cdot \nabla   \right) Y^k = \YY^{(1,l,q+1)}_{z_{\alpha^1}} + \YY^{(1,l-2,q+1)}_{z_{\alpha^2}} + \YY^{(1,l,q-1)}_{z_{\alpha^3}} +  \YY^{(1,l-2,q-1)}_{z_{\alpha^3}}.
\end{align*}
From assumption~\eqref{IH.R} we have that both $Y^{j(1,l,q-1),(1,l,q-1)}$ and \\$Y^{j(1,l-2,q-1),(1,l-2,q-1)}$ belong to $\GG^{q-1}$; and from assumption~\eqref{IH.R-1l.l1}
we have $Y^{j(1,l-2,q+1),(1,l-2,q+1)} \in \GG^{q}$. Thus, we can conclude that $\Pi\YY^{(1,l,q+1)}_{z_{\alpha^1}} \in \GG^{q}$.\\
To compute~$z_{\alpha^1}$ we use
\begin{align*}
\beta_{w^k,m}^{+++} =\beta_{w^k,m}^{-++} =  \frac{\pi }{8}(q-l +1)\quad\text{and}\quad \beta_{w^m,k}^{+++} =\beta_{w^m,k}^{-++} =  \frac{\pi}{8}(l-q-1),
\end{align*}
and obtain
\begin{align*}
z_{\alpha^1}
&= \begin{pmatrix}
0  \\
L_2 \left( \beta_{w^k,m}^{+++} + \beta_{w^k,m}^{-++} \right) + L_2q \left( \beta_{w^m,k}^{+++} + \beta_{w^m,k}^{-++} \right) \\
-L_3. \left( \beta_{w^k,m}^{+++} + \beta_{w^k,m}^{-++} \right) + L_3(1-l). \left( \beta_{w^m,k}^{+++} + \beta_{w^m,k}^{-++} \right)
\end{pmatrix} \\ &= \frac{\pi }{4}
\begin{pmatrix}
0 \\
 L_2(q-l+1)(1-q)\\
 L_3 (q-l+1)(l-2)
\end{pmatrix}.
\end{align*} 
Next, we choose the same frequencies $(k,m)$ with different~$(w^k,w^m)$:
\begin{align*}
k &=(1,l-1,q),& m &=(0,1,1),\\ w^k &= (L_1q,0,-L_3),& w^m &= (0,L_2,-L_3),
\end{align*}
Proceeding as above, we obtain that  $\Pi\YY^{(1,l,q+1)}_{z_{\gamma^1}} \in \GG^{q}$ and,
from
\[\beta_{w^k,m}^{+++} = \beta_{w^k,m}^{-++} = -\frac{\pi}{8}\quad\mbox{and}\quad\beta_{w^m,k}^{+++} = \beta_{w^m,k}^{-++} =  \frac{\pi}{8} (l-q-1),\] we find  
\begin{align*}
z_{\gamma^1}=\frac{\pi }{4}
\begin{pmatrix}
- L_1 q(q-l+1) \\
- L_2 \\
 L_3(q-l+2)
\end{pmatrix}.
\end{align*}
Then, from  
\begin{align*}
\frac{16}{\pi^2}\det(n~z_{\alpha^1}~z_{\gamma^1}) &= \det \begin{pmatrix}
1 & 0 & -L_1q(q-l+1) \\
l & L_2(q-l+1)(1-q)&-L_2  \\
q+1 &L_3(q-l+1)(l-2)&L_3(q-l+2) 
\end{pmatrix} \\
&=  \det \begin{pmatrix}
L_2(q-l+1)(1-q)&-L_2  \\
L_3(q-l+1)(l-2) &L_3(q-l+2) 
\end{pmatrix}  \\
&\quad - L_1q(q-l+1) \det \begin{pmatrix}
l & L_2(q-l+1)(1-q)  \\
q+1 &L_3(q-l+1)(l-2) 
\end{pmatrix} \\ 
& = - L_2L_3q(q-l+1)^2 -  L_1q(q-l+1)^2 \left( L_3l^2 - 2L_3l + L_2q^2 -L_2  \right) \\
&=  -q(q-l+1)^2 \left[ L_2L_3 + L_1L_3l(l - 2)  + L_1L_2(q^2 -1)  \right] < 0, 
\end{align*}
since $2 \le l \le q$, using Lemma \ref{lemma-lin-indp}, we can conclude that $Y^{\{1,2\},(1,l,q+1)} \in \GG^{q}$.

We have just proven that assumption~\eqref{IH.R-1l.l1} leads us to~$Y^{\{1,2\},(1,l,q+1)} \in \GG^{q}$. Then by induction, using~\eqref{01_ok_R}, we can conclude that
\begin{subequations}\label{res-step23-3Drect} 
\begin{align} 
\left\{ Y^{\{1,2 \},(1,l,q+1)}\mid  0< l\leq q  \right\} &\subseteq \GG^{q}, \label{res-step23-3Drect-a} 
\intertext{and by a similar argument we can derive that}
\left\{ Y^{\{1,2 \},(l,1,q+1)}\mid  0< l\leq q  \right\} &\subseteq \GG^{q}. \label{res-step23-3Drect-b}
\end{align}
\end{subequations}

\bigskip\noindent
{$\bullet$~Step~\ref{Step3_3Drect}: {\em~ Generating the family $Y^{j(n),n}$ with $n = (n_1,n_2, q+1)$ where $2 \le n_1 \le q $  and $2 \le n_2 \le q$.}

Again, we proceed by induction on the pair~$(n_1,n_2)$, under the lexicographical order $(n_1,n_2)<(m_1,m_2)$ iff $n_1<m_1$, or~$n_1=m_1$ and~$n_2<m_2$, defined on the set
$N_q\coloneqq \{(\kappa_1,\kappa_2)\in\{0,1,2,\dots,q\}^2\setminus\{0,0\}\}$.

\noindent{\bf Base step.} From~\eqref{one0_R}, \eqref{01_ok_R}, and~\eqref{res-step23-3Drect}, we know that 
\begin{equation}\label{base2_3Drect}
 Y^{j(n),n}\in \GG^{q},\quad\mbox{for all}\quad n=(n_1,n_2, q+1),\quad (n_1,n_2)\in N_q,\quad (0,0)<(n_1,n_2)<(2,2).
\end{equation}

\noindent{\bf Induction step.} Now we assume that 
\addtocounter{equation}{1}
\begin{align}
Y^{j(\kappa),\kappa} \in \GG^{q},\quad\mbox{for all}\quad\kappa\in N_q\quad\mbox{with}\quad
\quad\left\{\begin{array}{l}(0,0)<(\kappa_1,\kappa_2)<(n_1,n_2)\le(q,q),\\(2,2)\le(n_1,n_2),\quad \kappa_3=q+1
                         \end{array}\right.
\label{IH.R-n1.n2}\tag{{\bf IH.R}\ref{Step2_3Drect}-eq.\theequation}
\end{align}
We want to prove that~$Y^{j(n),n} \in \GG^{q}$, with~$n=(n_1,n_2, q+1)$.

By choosing
\begin{align*}
k &=(n_1-1,n_2-1,q),& m &=(1,1,1),\\ w^k &= (0,L_2q,L_3(1-n_2)),& w^m &= (0,L_2,-L_3),
\end{align*}
we find
 \begin{align*}
\left( Y^k \cdot \nabla   \right) Y^m + \left( Y^m \cdot \nabla   \right) Y^k = \YY^{(n_1,n_2,q+1)}_{z_{\alpha^1}} + \sum_{i=2}^8{\YY^{\kappa^i}_{z_{\alpha^i}}},
\end{align*}
with $\{\kappa^i\mid 2\le i\le8\}= \{(n_1-2,n_2-2,q-1), (n_1,n_2-2,q-1), (n_1-2,n_2,q-1),(n_1,n_2,q-1) (n_1-2,n_2-2,q+1), (n_1,n_2-2,q+1), (n_1-2,n_2,q+1) \}$.
From assumption~\eqref{IH.R}, we find that 
$\Pi \YY^{\kappa^i}_{z_{\alpha^i}} \in \GG^{q-1}$,
for $\kappa^i \in \{ (n_1,n_2-2,q-1), (n_1-2,n_2,q-1),(n_1,n_2,q-1) \}$; and assumption~\eqref{IH.R-n1.n2} implies that
$\Pi \YY^{\kappa^i}_{z_{\alpha^i}} \in \GG^{q}$}, for $\kappa^i \in \{(n_1,n_2-2,q+1), (n_1-2,n_2,q+1) \}$.

Now if~$(n_1,n_2)>(2,2)$, then again by assumptions~\eqref{IH.R} \\and~\eqref{IH.R-n1.n2} 
we find that~$\Pi \YY^{\kappa^i}_{z_{\alpha^i}} \in \GG^{q}$, with~$\kappa^i\in\{(n_1-2,n_2-2,q-1),(n_1-2,n_2-2,q-1)\}$. On the other hand if $(n_1,n_2)=(2,2)$, then
$\Pi \YY^{\kappa^i}_{z_{\alpha^i}} =0\in \GG^{q}$, with~$\kappa^i\in\{(n_1-2,n_2-2,q-1),(n_1-2,n_2-2,q+1)\}$.

Thus, we can conclude that $\Pi \YY^{(n_1,n_2,q+1)}_{z_{\alpha^1}} \in \GG^{q}$. Now, from
\[
\beta_{w^k,m}^{+++} = \frac{\pi}{8} (q-n_2+1)\quad\mbox{and}\quad\beta_{w^m,k}^{+++} = \frac{\pi}{8} (n_2-q-1),
\] we obtain
\begin{align*}
z_{\alpha^1} = \frac{\pi}{8}
\begin{pmatrix}
0 \\
L_2(q-n_2+1)(1-q) \\
L_3(q-n_2+1)(2-n_2)
\end{pmatrix}.
\end{align*}
Analogously with the choice
\begin{align*}
k &=(n_1-1,n_2-1,q),& m &=(1,1,1),\\ w^k &= (L_1q,0,L_3(1-n_1)),& w^m &= (0,L_2,-L_3),
\end{align*}
we can conclude that $\Pi \YY^{(n_1,n_2,q+1)}_{z_{\gamma^1}} \in \GG^{q}$ and, from
\[
\beta_{w^k,m}^{+++}  = \frac{\pi}{8} \left( q-n_1 +1 \right)\quad\mbox{and}\quad\beta_{w^m,k}^{+++} =  \frac{\pi}{8} (n_2 - q-1),
\]
we obtain 
\begin{align*}
z_{\gamma^1}=  \frac{\pi}{8}
\begin{pmatrix}
- L_1q(q-n_2+1)\\
 L_2(q-n_1+1) \\
 L_3(1-n_1)(q-n_2+1) + L_3(q-n_1+1)
\end{pmatrix}.
\end{align*}
The family~$\{n,z_{\alpha^1},z_{\gamma^1}\}$ is linearly independent, because
\begin{align*}
&~~ \det \begin{pmatrix}
n_1  & 0 & -L_1q(q-n_2+1)\\
n_2 & L_2(q-n_2+1)(1-q)&L_2(q-n_1+1) \\
q+1 & L_3(q-n_2+1)(n_2-2) &L_3(n_1-1)(q-n_2+1) - L_3(q-n_1+1)
\end{pmatrix} \\
&=  n_1 (q-n_2+1) \det  \begin{pmatrix}
 L_2(1-q) & L_2(q-n_1+1) \\
L_3(n_2-2) & L_3(n_1-1)(q-n_2+1) - L_3(q-n_1+1)
\end{pmatrix}\\
&~~ - L_1q(q-n_2+1)^2 \det \begin{pmatrix}
n_2 &  L_2(1-q) \\
q+1 & L_3(n_2 -2) 
\end{pmatrix}   \\
& = - q(q-n_2+1)^2 \left[ L_2 L_3n_1 (n_1 - 2) + L_1 L_3 n_2 (n_2 - 2)  + L_1L_2(q^2 - 1)\right] <0,
\end{align*}
since $2 \le n_1 \le q$ and $2 \le n_1 \le q$. Thus from Lemma~\ref{lemma-lin-indp} we have that $Y^{\{1,2\},(n_1,n_2,q+1)}\subset\GG^{q}$.

We have just proved that assumption~\eqref{IH.R-n1.n2} implies that
\begin{equation*} 
Y^{\{1,2\},(n_1,n_2,q+1)} \in \GG^{q}.
\end{equation*}
Therefore, using \eqref{base2_3Drect}, by induction it follows that~$Y^{\{1,2\},n} \in \GG^{q}$ with~$n=(n_1,n_2,q+1)$ and~$(n_1,n_2)\in N_q$, which implies that
$Y^{\{1,2\},n} \in \GG^{q}$ for all $n \in \RR^{q+1}_3$. An analogous argument leads us to
\begin{align} \label{res-part1-3Drect}
Y^{\{1,2\},n} \in \GG^{q},  \quad \text{for all~} n \in \RR^{q+1}_1\cup\RR^{q+1}_2\cup\RR^{q+1}_3,
\end{align}
which ends the proof of Lemma~\ref{L:Part1_R}.\null\hfill\null{\qed}

\subsubsection{Proof of Lemma~\ref{L:Part2_R}}.\label{ssS:Part2_R}

We prove that $Y^{j(n),n} \in \GG^{q}$ for $n=(l,q+1,q+1) \in \LL_{2,3}^{q+1}$, $1\le l\le q$.
We choose
\begin{align*}
k &=(l,q-1,q),& m &=(0,2,1),\\ w^k &= (0,L_2q,L_3(1-q)),& w^m &= (0,L_2,-2L_3),
\end{align*}
which leads us to
\begin{align*}
\left( Y^k \cdot \nabla   \right) Y^m + \left( Y^m \cdot \nabla   \right) Y^k = \YY^{(l,q+1,q+1)}_{z_{\alpha^1}}
+  \YY^{(l,q-2, q+1)}_{z_{\alpha^2}} + \YY^{(l,q+1, q-1)}_{z_{\alpha^3}} + \YY^{(l,q-2, q-1)}_{z_{\alpha^4}}.
\end{align*}
By the induction hypothesis~\eqref{IH.R} we have $\Pi\YY^{(l,q-2,q-1)}_{z_{\alpha^4}} \in \GG^{q-1}$. From \eqref{res-part1-3Drect},
since $\{(l,q-2, q+1),\,(l,q+1,q-1)\} \subset \RR^{q+1}_3 \cup \RR^{q+1}_2$, we also have
$\Pi\YY^{(l,q-2, q+1)}_{z_{\alpha^2}}+\Pi\YY^{(l,q+1,q-1)}_{z_{\alpha^3}} \in \GG^{q}$.  Therefore, we obtain that $\Pi \YY^{(l,q+1,q+1)}_{z_{\alpha^1}} \in \GG^{q}$.
Now, from
\[
\beta_{w^k,m}^{+++} = \beta_{w^k,m}^{-++} = \frac{\pi}{8}(q+1)\quad\mbox{and}\quad\beta_{w^m,k}^{+++} = \beta_{w^m,k}^{-++} =  -\frac{\pi}{8} (q+1),  
\]
we obtain
\begin{align*}
z_{\alpha^1}
&= \begin{pmatrix}
0  \\
L_2 \left( \beta_{w^k,m}^{+++} + \beta_{w^k,m}^{-++} \right) + L_2q \left( \beta_{w^m,k}^{+++} + \beta_{w^m,k}^{-++} \right) \\
-2L_3 \left( \beta_{w^k,m}^{+++} + \beta_{w^k,m}^{-++} \right) + L_3(1-q) \left( \beta_{w^m,k}^{+++} + \beta_{w^m,k}^{-++} \right)
\end{pmatrix} \\ &= \frac{\pi}{4}
\begin{pmatrix}
0 \\
L_2(1-q^2) \\
L_3(q+1)(q-2)
\end{pmatrix}.
\end{align*} 
Analogously the choice
\begin{align*}
k &=(l,q-1,q),& m &=(0,2,1),\\ w^k &= (L_1q,0,-L_3l),& w^m &= (0,L_2,-2L_3),
\end{align*}
allow us to conclude that $\Pi \YY^{(l,q+1,q+1)}_{z_{\gamma^1}} \in \GG^{q}$. Where, from
\[
\beta_{w^k,m}^{+++} = \beta_{w^k,m}^{-++} = -\frac{ \pi}{8}l\quad\mbox{and}\quad\beta_{w^m,k}^{+++} = \beta_{w^m,k}^{-++} =  -\frac{\pi}{8} (q+1), 
\]
we have 
\begin{align*}
&~z_{\gamma^1}
= \frac{\pi}{4}
\begin{pmatrix}
-L_1 q(q+1) \\
-L_2 l \\
L_3 l(q+3)
\end{pmatrix}.
\end{align*} 
Now from Lemma~\ref{lemma-lin-indp} and 
\begin{align*}
\det(n~z_{\alpha^1}~z_{\gamma^1})&= \det \begin{pmatrix}
l & 0 & -L_1q(q+1)\\
q+1 &L_2(1-q^2) &-L_2l  \\
q+1 &L_3(q+1)(q-2) &L_3l(q+3)
\end{pmatrix} \\  &= -q (q+1)^2 \left[ L_2L_3l^2 + L_1L_3(q+1)(q-2) + L_1L_2(q^2-1) \right] < 0, 
\end{align*}
because $l \ge 1$ and $q \ge 3$, it follows that~$Y^{\{1,2 \},(l,q+1,q+1)} \in \GG^{q}$, for ~$1 \le l \le q$. A similar argument gives us

\begin{align} \label{res-part2-3Drect}
Y^{\{1,2\},n} \in \GG^{q},  \quad \text{for all}\quad n \in \LL^{q+1}_{1,2} \cup \LL^{q+1}_{2,3} \cup \LL^{q+1}_{3,1},
\end{align}
which ends the proof of Lemma~\ref{L:Part2_R}.\null\hfill\null{\qed}

\subsubsection{Proof of Lemma~\ref{L:Part3_R}.}\label{ssS:Part3_R} 

Firstly, we choose
\begin{align*}
k &=(q,q-1,q),& m &=(1,2,1),\\ w^k &= (0,L_2q,L_3(1-q)),& w^m &= (0,L_2,-2L_3),
\end{align*}
which give us 
\begin{align*}
\left( Y^k \cdot \nabla   \right) Y^m + \left( Y^m \cdot \nabla   \right) Y^k = \YY^{(q+1,q+1,q+1)}_{z_{\alpha^1}} + \sum_{i=2}^8{\YY^{\kappa^i}_{z_{\alpha^i}}},
\end{align*}
where $\{\kappa^i\mid i\in\{2,\cdots,8\}\}=  \{ (q+1, q+1, q-1),\, (q-1, q+1, q+1)),\,(q-1, q-2, q+1),~(q-1, q+1, q-1),~(q+1, q-2, q-1),~(q-1, q-2, q+1),\,(q-1, q-2, q-1) \}$.\\
Since
\[
\{\kappa^i\mid i\in\{2,\cdots,8\}\}\subseteq (\RR^{q+1}_1\cup\RR^{q+1}_2\cup\RR^{q+1}_3)\bigcup(\LL^{q+1}_{1,2} \cup \LL^{q+1}_{2,3})\bigcup\sS^q 
\]
from~\eqref{IH.R}, \eqref{res-part1-3Drect}, and~\eqref{res-part2-3Drect} we can conclude that~$\Pi \YY^{(q+1,q+1,q+1)}_{z_{\alpha^1}} \in \GG^{q}$.
 
Now, from the identities
\[
 \beta_{w^k,m}^{+++} = \frac{\pi}{8}(q+1)\quad\mbox{and}\quad\beta_{w^m,k}^{+++} =  -\frac{\pi}{8} (q+1),
\]
we obtain 
\begin{align*}
z_{\alpha^1} = \begin{pmatrix}
0  \\
L_2.\beta_{w^k,m}^{+++}  + L_2q.\beta_{w^m,k}^{+++}  \\
-2L_3.\beta_{w^k,m}^{+++} + L_3(1-q).\beta_{w^m,k}^{+++} 
\end{pmatrix} = \frac{\pi}{8}
\begin{pmatrix}
0 \\
 L_2(1-q^2) \\
 L_3(q+1)(q-2)
\end{pmatrix}.
\end{align*} 

Next by choosing
\begin{align*}
k &=(q,q-1,q),& m &=(1,2,1),\\ w^k &= (L_1(1-q),L_2q,0),& w^m &= (0,L_2,-2L_3),
\end{align*}
and proceeding as above, we can conclude that $\Pi \YY^{(q+1,q+1,q+1)}_{z_{\gamma^1}} \in \GG^{q}$, with
\begin{align*}
z_{\gamma^1}  = \begin{pmatrix}
0\beta_{w^k,m}^{+++} + L_1(1-q)\beta_{w^m,k}^{+++}  \\
L_2 \beta_{w^k,m}^{+++}  + L_2q \beta_{w^m,k}^{+++}  \\
-2L_3.\beta_{w^k,m}^{+++} + 0  \beta_{w^m,k}^{+++}
\end{pmatrix} =  \frac{\pi}{8}
\begin{pmatrix}
 L_1(q^2-1)  \\
 L_2 (1-q^2) \\
- 2L_3(q+1)
\end{pmatrix}.
\end{align*} 

With~$n=(q+1,q+1,q+1)$, using again Lemma \ref{lemma-lin-indp} and
\begin{align*}
&\det(n~z_{\alpha^1}~z_{\gamma^1})=\frac{\pi^2}{64}(q+1)^3\det \begin{pmatrix}
1 & 0 & L_1(q-1) \\
1 &L_2(1-q) &L_2(1-q)   \\
1 &L_3(q-2) &-2L_3
\end{pmatrix} \\ &\qquad= \frac{\pi^2}{64}(q+1)^3 \left[ 
L_2L_3 \det  \begin{pmatrix}
1-q & 1-q \\
q-2 & -2
\end{pmatrix}  + L_1(q-1) \det  \begin{pmatrix}
1 & L_2(1-q) \\
1 & L_3(q-2)
\end{pmatrix} 
\right]\\
&\qquad=\frac{\pi^2}{64}(q+1)^3(q-1) \left[ (L_1L_2+L_2L_3) (q-1) + L_1L_3 (q-2) \right]> 0,
\end{align*}
because $q\ge 3$, we obtain 
\begin{align} \label{res-part3-3Drect}
Y^{\{1,2\},(q+1,q+1,q+1)} \subset \GG^{q},
\end{align}
which ends the proof of Lemma~\ref{L:Part3_R}.\null\hfill\null{\qed}

\section{Final Remarks} 
\label{S:FinRe}
We proved the approximate controllability of the Navier--Stokes system in a $\mathrm{3D}$ rectangle by degenerate (low modes) forcing,
under Lions boundary conditions. We used the analogous $\mathrm{2D}$ result, derived in~\cite{Rod-Thesis08} (see also~\cite{Rod06} for $({\tt B},\D(A))$-saturating sets).
In~\cite{PhanRod-ecc15}
the case of a~$\mathrm{2D}$ cylinder is considered, thus we may wonder whether we can also derive the approximate controllability for the case of a~$\mathrm{3D}$ cylinder.
This case can be seen as the case where the fluid is contained in a long (infinite) $\mathrm{3D}$ channel with Lions boundary conditions, and with the periodicity
assumption on the long (infinite)
direction, thus it is a case of interest for applications. First computations show that the existence of a $({\tt L},\D(A))$-saturating set in this case is plausible, but
the computations details are still to be checked. Since those computations will be long, and since this manuscript is already long, we will investigate the case of a $\mathrm{3D}$ cylinder in a future work.

\smallskip
We underline that the presented saturating set is (by definition) independent of the viscosity coefficient~$\nu$. That is, approximate controllability holds by means of controls taking values
in~$\GG^1=\linspan(\CC)+\linspan{\BB(\CC,\linspan\CC)}=\linspan\left(\CC\bigcup\BB(\CC,\CC)\right)$, for any~$\nu>0$.
It is plausible that a $({\tt L},\D(A))$-saturating set with less elements does exist, but
it is not our goal here to minimize the number of elements of~$\CC$.

\smallskip
We have used the result in~\cite{Shirikyan06} where it is proven that under Dirichlet boundary conditions
the existence of a $({\tt B},\D(A))$-saturating set implies
the approximate controllability of Navier--Stokes system by degenerate forcing. We can conclude from our results that the same controllability result follows from the
existence of a~$({\tt L},\D(A))$-saturating set. However, up to our knowledge, neither the existence of a $({\tt B},\D(A))$-saturating set nor that of a
$({\tt L},\D(A))$-saturating set is known under Dirichlet boundary conditions. That is, essentially the approximate controllability of the Navier--Stokes system
is still an open problem under Dirichlet boundary conditions. Therefore, it is of interest to find a
saturating set for such classical boundary conditions, because they are the most realistic in many situations.

Up to now the known examples of saturating sets consist of eigenfuntions of the Stokes operator. For applications, it would be interesting to consider more realistic
functions as actuators, as locally supported functions, recall~\cite[Problem~VII]{Agrachev13_arx} (cf.~\cite[Section~V]{PhanRod-ecc15}). Furthermore,
the explicit expressions for
the Stokes operator may be not available as it is the case (up to our best knowledge) for Dirichlet boundary conditions. Here, and in previous works the density of
$\mathop{\bigcup}_{j\in\N}\GG^j$ in~$H$ has been proven by showing the the union contains all the eigenfunctions of the Stokes operator. Thus this argument may be difficult
(maybe, not possible) to use in the case of Dirichlet boundary conditions, which makes the investigation of these last boundary conditions an interesting problem.

\medskip\noindent
{\bf Acknowledgments.}
The authors acknowledge partial support from the Austrian Science Fund (FWF): P 26034-N25. D. Phan appreciates partial support from the Foundation of Tampere University of Technology.

\end{document}